\newtheorem{theorem}{Theorem}[section]
\newtheorem{proposition}[theorem]{Proposition}
\newtheorem{lemma}[theorem]{Lemma}
\newtheorem{corollary}[theorem]{Corollary}
\theoremstyle{definition}
\newtheorem{definition}[theorem]{Definition}
\newtheorem{example}[theorem]{Example}
\theoremstyle{remark}
\numberwithin{equation}{section}
\begin{document}

\title[]{Harmonicity of unit vector fields with respect to a class of Riemannian metrics}

\author{A. Baghban and E. Abedi}
\address[]{Department of Mathematics, Azarbaijan Shahid Madani University, Tabriz 53751 71379, I. R. Iran.}

\email{amirbaghban@aut.ac.ir,\ \ esabedi@azaruniv.ac.ir}

\maketitle

\begin{abstract}
The isotropic almost complex structures induce a Riemannian metric $g_{\delta , \sigma }$ on $TM$, which are the generalized type of Sasakian metric. In this paper, the Levi-Civita connection of $g_{\delta , \sigma }$ is calculated and the harmonicity of unit vector fields from $(M,g)$ to $(S(M), i^*g_{\delta , 0})$ is investigated, where $i^*g_{\delta , 0}$ is a particular type of induced metric $i^* g_{\delta ,\sigma}$. Finally, an important example is presented which satisfies in main theorem of the paper.\\
\\
\\
\textbf{Keywords:}  Tangent bundle, unit tangent bundle, isotropic almost complex structure, energy functional, variational problem.
\\
\\
\textbf{MSC(2010):} 53C25, 53C40.
\end{abstract}

\section{Introduction}

�
Let $(M,g)$ be a Riemannian manifold and $(TM,g_s)$ be its tangent bundle equipped with Sasaki metric. Moreover, suppose $(S(M),i^*g_s)$ is the unit tangent bundle of $(M,g)$ with induced Sasaki metric. Denote by $\Gamma TM$ the set of all smooth vector fields on $M$.

Since, every vector field defines a map from $(M,g)$ to $(TM,g_s)$, it is natural to investigate the harmonicity of vector fields as a map with the exception that the energy functional is restricted to the vector fields on $M$ instead of all functions from $(M,g)$ to $(TM,g_s)$. Medrano \cite{gil} investigated the harmonicity of a vector field from $(M,g)$ to $(TM,g_s)$ and he proved that the parallel vector fields are the only ones when $(M,g)$ is a compact Riemannian manifold.

One can study the harmonicity of vector fields using by variational problem. This links the tension tensor field and critical points of the energy functional. Ishihara \cite{ishihara} calculated the tension tensor field of a vector field as a map from $(M,g)$ to $(TM,g_s)$ and represented another equivalent to the harmonicity of them. He showed that, the necessary and sufficient conditions for the harmonicity of a vector field is the vanishing of its Laplacian, i.e., $ \Delta _g X=0$.

On a compact Riemannian manifold $(M,g)$, the conditions $\nabla X=0$ and $\Delta _g X=0$ are equivalent for an arbitrary vector field, so Medrano and Ishihara present two different Equivalences for harmonicity of an arbitrary vector field.

We know that, if we restrict the energy functional to the unit vector fields, the vanishing of $ \Delta _g X$ ensure the harmonicity of unit vector fields as a map from compact Riemannian manifold $(M,g)$ to $(S(M),i^*g_s)$. However, this is a big condition for a unit vector field to be a harmonic vector field, therefore, it is natural to investigate the harmonicity of unit vector fields as a map from $(M,g)$ to $(S(M),i^*g_s)$. Wiegmink \cite{Wiegmink} demonstrated that a unit vector field $X$ is a harmonic unit vector field if and only if $\Delta _g X=||\nabla X||^2 X$.

The contribution on the harmonicity of vector fields did not limited to the tangent bundles equipped with Sasaki metric. Abbassi and Calvaruso and Perrone \cite{abbasi} considered the problem of determining which vector fields $X:(M,g) \longrightarrow (TM,G)$ define harmonic maps where $G$ is an arbitrary g-natural metric on $TM$.

Aguilar \cite{aguilar} introduced Isotropic almost complex structures. Dragomir and Perrone \cite{dragomir} introduced the problem of studing the harmonic (unit) vector fields where $TM$ equipped with Riemannian metric $g_{\delta , \sigma}$ induced by an arbitrary isotropic almost complex structure $J_{\delta , \sigma }$. In this paper we solve this problem when the unit tangent bundle $S(M)$ is equipped with induced Riemannian metric $i^*g_{\delta , 0}$, which is a particular type of $i^*g_{\delta ,\sigma}$.

Section 2, gives a plenary preliminarie of the tangent bundle, unit tangent bundle, pullback tangent bundle, energy functional, variational problem and isotropic almost complex structures. Whereas Section 3 presents a discussion about the induced metrics $g_{\delta , \sigma}$ on $TM$  and calculating of its Levi-civita connection. In section 4 we calculate the tension tensor field of an arbitrary vector field $X:(M,g) \longrightarrow (TM,g_{\delta ,\sigma})$ and section 5 is devoted to calculate the tension tensor field of unit vector fields from $(M,g)$ to $(S(M),i^*g_{\delta ,\sigma})$. In this section the critical points of the energy functional are determined.
\section{Preliminaries}
\subsection{Brief discussion about the tangent bundle and pullback tangent bundle}
Let $(M,g)$ be an n-dimensional Riemannian manifold and $\nabla$ its Levi-Civita connection. Moreover let $\pi: TM \longrightarrow M$ be its tangent bundle and $K:TTM \longrightarrow TM$ be connection map with respect to $\nabla$. We can split $TTM$ to vertical and horizontal sub-vector bundles $\mathcal{V}$ and $\mathcal{H}$, respectively, i.e., for every $v \in TM$, $T_vTM=\mathcal{V}_v  \oplus \mathcal{H}_v$. These distributions have the following properties
\begin{itemize}
\item $ \pi_{*v}\mid_{\mathcal{H} _v}: {\mathcal{H}}_v \longrightarrow T_{\pi(v)}M $ is an isomorphism.
\item $ K \mid_{{\mathcal{V}}_v }: \mathcal{V}_v \longrightarrow T_{\pi(v)} M $ is an isomorphism.
\end{itemize}

$X \in \Gamma (TM)$ has vertical and horizontal lifts $(X^v)_u=(K \mid_{{\mathcal{V}}_u })^{-1}X_{\pi (u)} \in \mathcal{V}_u$ and $(X^h)_u=(\pi_{*v}\mid_{\mathcal{H} _u})^{-1}X_{\pi (u)}  \in \mathcal{H}_u$, when $X_{\pi(u)} \in T_{\pi (u)}M$.

Let $(p,u) \in TM$, the Lie bracket of the horizontal and vertical vector fields are expressed as follows
\begin{align}
[X^h,Y^h]_{(p,u)}&=[X,Y]^h_{(p,u)}-(R(X,Y)u)^v_{(p,u)},\\
[X^h,Y^v]_{(p,u)}&=(\nabla _X Y)^v_{(p,u)},\\
[X^v,Y^v]_{(p,u)}&=0_{(p,u)}.
\end{align}
Let $f=(f_1,...,f_n):(M,g) \longrightarrow (N,h)$ be a $ C^{\infty} $ function between Riemanian manifolds and $\nabla ^M$ and $\nabla ^N$ be Levi-Civita connections of $g$ and $h$, respectively. If $TN$ be the tangent bundle of $N$, then we can define vector bundle $f^{-1}TN$ on $M$ by
\begin{align*}
(f^{-1} TN)_p=T_{f(p)}N \hspace{10mm} \forall p \in M.
\end{align*}
There is a natural connection on $f^{-1} TN$ defined by
\begin{align*}
(f^{-1} \nabla ^N )_{\frac{ \partial}{ \partial x^i}}Y_{\beta}=\frac{\partial f^{\alpha}}{\partial x^i}[(\Gamma ^N)^{\gamma}_{\alpha \beta } o f]Y_{\gamma} \quad \forall i=1,...,m \quad and \quad \alpha , \beta , \gamma =1,...,n,
\end{align*}
where $( x^1,...,x^m)$ be a local chart on $M$ and $( y^1,...,y^n)$ be a local chart on $N$ and $\lbrace Y_1= \frac{ \partial}{ \partial y^1}o f,...,Y_n= \frac{ \partial}{ \partial y^n}o f  \rbrace$ is a local basis of sections on $f^{-1}TN$ and $\lbrace (\Gamma ^N)^{\gamma}_{\alpha \beta }\rbrace$ are the Christoffel symbols of the Levi-Civita connection of $h$.

\subsection{Energy functional on vector fields}
Let $(M,g)$ be an n-dimensional compact manifold and $(TM,G)$ be its tangent bundle equipped with an arbitrary Riemannian metric $G$. Moreover, let $F:(M,g) \longrightarrow (TM,G)$ be an arbitrary smooth function. Then, the energy of $F$ is defined by 
\begin{align*}
E(F)=\frac{1}{2} \int _M ||dF||^2 dvol(g),
\end{align*}
where $||dF||$ is the Hilbert-Schmitd norm of $dF$, i.e., $||dF||=tr_g(F^*G)$ and $dvol(g)$ is the Riemannian volume form on $M$. We call $E$ the energy functional.

In particular, we can restrict the energy functional to the sections of $TM$, that is, all of the vector fields on $M$. Due to this, we call $E$ the energy functional on vector fields.
\subsection{Variational problems on vector fields}
Given an arbitrary tangent vector field $X \in \Gamma  TM$. A smooth 1-parameter variation on $X$ is a map $\mathcal{U}: M \times (-\delta , \delta ) \longrightarrow TM$ with this feature that $ \mathcal{U}_t $ is a vector field on $M$ defined by $ \mathcal{U}_t (p)=\mathcal{U}(p,t) \in T_pM$ , for every $t \in (-\delta , \delta)$ and $\mathcal{U}(p,0)=X_p$. We can define a vector field on $\mathcal{U} (M \times \lbrace 0 \rbrace )= X(M) \subseteq TM$ defined by $\mathcal{V}_{\mathcal{U}(p,0)}= \frac{d}{dt}|_{t=0}\mathcal{U} (p,t)$. It is clear that $\mathcal{V}$ is a vertical vector field and we call it variational vector field.

Variational problem on vector fields calculates the critical points of $E$, hearafter called as harmonic vector fields.

\subsection{Isotropic almost complex structures}
Isotropic almost complex structures are a generalized type of natural almost complex structure on $TM$ due to Aguilar \cite{aguilar}. The isotropic almost complex structures are determining an almost kahler metric
whose kahler 2-form is the pullback of the canonical symplectic form on $T^*M$ to $TM$ via $\mathfrak{b} :TM \longrightarrow T^*M$. Moreover, Aguilar \cite{aguilar} proved that there is an isotropic complex structure on $M$ if and
only if $M$ is of constant sectional curvature.

\begin{definition}\cite{aguilar}
An almost complex structure $J$ on $TM$ is said to be isotropic with respect to the
Riemannian metric on $M$, if there are smooth functions $\alpha, \delta, \sigma :TM \longrightarrow R$ such that $\alpha \delta - \sigma^2 =1$ and
\begin{align}
 JX^h=\alpha X^v + \sigma X^h, \hspace{20mm} JX^v=-\sigma X^v - \delta X^h.
\end{align}
\end{definition}
\section{A class of Riemannian metrics on $TM$}
Let $\Theta \in \Omega ^1 (TM)$ be a 1-form on $TM$ defined by
\begin{align}
\Theta _v (A)=g_{\pi(v)}(\pi _*(A),v), \hspace{5mm}  A \in T_vTM, \hspace{5mm}  v \in TM.
\end{align}
\begin{definition} \cite{dragomir} Let $J_{\delta,\sigma}$ be an isotropic almost complex structure. A (0,2)-tensor $g_{\delta,\sigma}(A,B)=d\Theta (J_{\delta,\sigma}A,B)$ defines a Riemannian metric on $TM$ provided that $0 \lvertneqq \alpha $ where $ A,B \in \Gamma TTM$.
\end{definition}
A simple calculation shows that 
\begin{align}
g_{\delta , \sigma}(X^h,Y^h)=& \alpha g(X,Y) o\pi ,\\
g_{\delta , \sigma}(X^h,Y^v)=& - \sigma g(X,Y) o\pi ,\\ 
g_{\delta , \sigma}(X^v,Y^v)=& \delta g(X,Y) o\pi .
\end{align}
For calculating the Levi-Civita connection of $g_{\delta ,\sigma}$ we need the following lemma.
\begin{lemma}\cite{sarih} Let $X$, $Y$ and $Z$ be any vector field on $M$. Then
\begin{align}
&X^h(g(Y,Z)o\pi)=(Xg(Y,Z))o\pi ,\\
&X^v(g(Y,Z)o\pi)=0.
\end{align}
\end{lemma}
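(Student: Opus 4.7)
The plan is to reduce both identities to the chain rule together with the basic characterizations of the horizontal and vertical lifts in terms of the projection $\pi : TM \longrightarrow M$. The function $g(Y,Z)$ lives on $M$, so $g(Y,Z)\circ \pi$ is merely a pullback, and everything should reduce to how $\pi_*$ acts on $X^h$ and $X^v$.

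More precisely, I would first recall that by the very definition of the lifts stated in Section~2, one has
\begin{align*}
\pi_{*(p,u)}(X^h_{(p,u)}) = X_p,\qquad \pi_{*(p,u)}(X^v_{(p,u)}) = 0,
\end{align*}
since $X^v_{(p,u)}\in \mathcal{V}_{(p,u)} = \ker \pi_{*(p,u)}$ and $X^h_{(p,u)}\in \mathcal{H}_{(p,u)}$ is the unique horizontal vector projecting to $X_p$. Setting $f := g(Y,Z) \in C^\infty(M)$, I would then apply the chain rule to the composition $f\circ \pi$: for any $A\in T_{(p,u)}TM$,
\begin{align*}
A(f\circ \pi) = df_{\pi(p,u)}\bigl(\pi_{*(p,u)}(A)\bigr).
\end{align*}

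Specializing $A=X^h_{(p,u)}$ gives $X^h(f\circ \pi)(p,u) = df_p(X_p) = (Xf)(p) = \bigl((Xf)\circ \pi\bigr)(p,u)$, which is the first identity. Specializing $A=X^v_{(p,u)}$ gives $X^v(f\circ \pi)(p,u) = df_p(0) = 0$, which is the second. Since the point $(p,u)\in TM$ was arbitrary, both equalities hold as identities of smooth functions on $TM$.

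There is essentially no obstacle here; the content of the lemma is the two kernel/image properties of $\pi_*$ restricted to $\mathcal{H}$ and $\mathcal{V}$ recalled in Section~2, combined with the chain rule. The only care required is notational: keeping track that $g(Y,Z)$ is to be regarded as a scalar function on $M$ (not on $TM$) before pulling back, so that $X$ acts on it in the standard way as a derivation on $M$.
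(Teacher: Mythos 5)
Your proof is correct and complete: the identities follow exactly from the chain rule applied to the pullback $f\circ\pi$ together with $\pi_{*}(X^h)=X$ and $\pi_{*}(X^v)=0$, the latter because $\mathcal{V}=\ker\pi_{*}$. The paper states this lemma as a citation to the literature without giving a proof, so there is nothing to compare against; your argument is the standard one and fills that gap correctly.
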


\begin{theorem} Let $g_{\delta,\sigma}$ be a Riemannian metric on $TM$ as above. Then the Levi-Civita connection ̅$\bar{\nabla}$ of $g_{\delta,\sigma}$ at $(p,u) \in TM$ is given by

\begin{align}
\bar{\nabla}_{X^h} Y^h&=(\nabla _X Y)^h- \frac{\sigma}{\alpha} (R(u,X)Y)^h+ \frac{1}{2 \alpha} X^h (\alpha) Y^h +  \frac{1}{2 \alpha}Y^h(\alpha) X^h
\nonumber \\
& - \frac{\sigma}{\delta} (\nabla _X Y)^v-\frac{1}{2}(R(X,Y)u)^v- \frac{1}{2 \delta} X^h(\sigma) Y^v
\nonumber \\
&-\frac{1}{2\delta}Y^h(\sigma)X^v-\frac{1}{2}g(X,Y)\bar{\nabla} \alpha ,\\
\bar{\nabla}_{X^h} Y^v&=-\frac{\sigma}{\alpha} (\nabla _X Y)^h + \frac{\delta}{2 \alpha}(R(u,Y)X)^h-\frac{1}{2\alpha}X^h(\sigma) Y^h+\frac{1}{2\alpha}Y^v(\alpha) X^h
\nonumber\\
&+(\nabla _X Y)^v+\frac{1}{2\delta}X^h(\delta)Y^v- \frac{1}{2\delta}Y^v(\sigma)X^v+ \frac{1}{2}g(X,Y) \bar{\nabla} \sigma ,\\
\bar{\nabla}_{X^v} Y^h&=\frac{\delta}{2\alpha}(R(u,X)Y)^h+ \frac{1}{2\alpha}X^v(\alpha)Y^h-\frac{1}{2\alpha}Y^h(\sigma)X^h
\nonumber\\
&- \frac{1}{2\delta}X^v(\sigma)Y^v+\frac{1}{2\delta}Y^h(\delta)X^v+\frac{1}{2}g(X,Y)\bar{\nabla}\sigma ,\\
\bar{\nabla}_{X^v} Y^v&=-\frac{1}{2\alpha}X^v(\sigma)Y^h- \frac{1}{2\alpha}Y^v(\sigma)X^h+ \frac{1}{2\delta}X^v(\delta)Y^v+\frac{1}{2\delta}Y^v(\delta)X^v
\nonumber \\
&-\frac{1}{2}g(X,Y)\bar{\nabla}\delta .
\end{align}
\end{theorem}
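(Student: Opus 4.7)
The plan is to derive each of the four formulas from the Koszul identity
\[
2g_{\delta,\sigma}(\bar\nabla_A B, C) = A\,g_{\delta,\sigma}(B,C) + B\,g_{\delta,\sigma}(A,C) - C\,g_{\delta,\sigma}(A,B) + g_{\delta,\sigma}([A,B],C) - g_{\delta,\sigma}([A,C],B) - g_{\delta,\sigma}([B,C],A),
\]
applied systematically with $A,B$ equal to the two lifts of $X$ and $Y$ and with $C$ ranging over $Z^h$ and $Z^v$ for an arbitrary $Z\in\Gamma TM$. This reduces the whole task to mechanically evaluating twelve inner products and three bracket-based terms in each of the four cases.

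First I would assemble the ingredients. The metric pairings come from (3.2)--(3.4); the brackets of lifts come from (2.1)--(2.3); and Lemma 3.3 tells us how $X^h$ and $X^v$ act on functions of the form $g(Y,Z)\circ\pi$. The three terms of the form $A\,g_{\delta,\sigma}(B,C)$ in Koszul split into two pieces: a piece where the derivation falls on one of $\alpha,\delta,\sigma$, producing factors like $X^h(\alpha)$, $Y^v(\sigma)$, etc., and a piece where it falls on $g(\cdot,\cdot)\circ\pi$, which by Lemma 3.3 either produces a covariant derivative term $(X g(Y,Z))\circ\pi$ (horizontal case) or vanishes (vertical case). The bracket terms contribute the base Levi-Civita connection via $[X^h,Y^v]=(\nabla_X Y)^v$, and inject the curvature of $(M,g)$ via $[X^h,Y^h]=[X,Y]^h-(R(X,Y)u)^v$.

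Next, for each fixed pair of lifts for $X,Y$ I would collect $2g_{\delta,\sigma}(\bar\nabla_{X^?}Y^?,Z^h)$ and $2g_{\delta,\sigma}(\bar\nabla_{X^?}Y^?,Z^v)$, read off both a horizontal and a vertical component of $\bar\nabla_{X^?}Y^?$ by expanding the unknown in the basis $\{Z^h,Z^v\}$ and inverting the $2\times2$ matrix
\[
\begin{pmatrix}\alpha & -\sigma\\ -\sigma & \delta\end{pmatrix},
\]
whose determinant is $\alpha\delta-\sigma^2=1$. The cross term $-\sigma$ is exactly the source of the mixed expressions $-\frac{\sigma}{\alpha}(\nabla_XY)^v$ and $-\frac{\sigma}{\alpha}(R(u,X)Y)^h$ that appear in the answer, so this inversion cannot be skipped. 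The terms of the form $\tfrac12 g(X,Y)\bar\nabla\alpha$, $\tfrac12 g(X,Y)\bar\nabla\sigma$, $-\tfrac12 g(X,Y)\bar\nabla\delta$ arise precisely from the Koszul term $-C\,g_{\delta,\sigma}(A,B)$, where differentiating a coefficient against an arbitrary direction $Z^h$ or $Z^v$ forces us to recognize the resulting linear functional as $g_{\delta,\sigma}(\,\cdot\,,\bar\nabla\alpha)$, and similarly for $\delta$ and $\sigma$.

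The hard part will be purely combinatorial: keeping track of which of $\alpha,\delta,\sigma$ gets differentiated in which direction, pairing each such term correctly with its partner produced by the $Z^h$/$Z^v$ inversion, and not losing track of signs coming from the curvature identity $g(R(u,X)Y,Z)=g(R(Y,Z)u,X)$ which is needed to rewrite the curvature contributions as $(R(u,X)Y)^h$ or $(R(X,Y)u)^v$ depending on whether they end up in the horizontal or vertical component. Once the bookkeeping is organized case by case (four cases, each split into horizontal and vertical projection), the four displayed formulas follow directly.
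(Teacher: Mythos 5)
Your proposal is correct and follows essentially the same route as the paper: the Koszul formula combined with the bracket identities (2.1)--(2.3), Lemma 3.3, and the first Bianchi identity, then reading off horizontal and vertical components by testing against $Z^h$ and $Z^v$. Your explicit inversion of the Gram matrix $\begin{pmatrix}\alpha & -\sigma\\ -\sigma & \delta\end{pmatrix}$ is just a more carefully stated version of what the paper does implicitly when it repackages the right-hand side as $g_{\delta,\sigma}(\,\cdot\,,Z^h)$ with the mixed $-\tfrac{\sigma}{\alpha}$ and $-\tfrac{\sigma}{\delta}$ coefficients already in place.
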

\begin{proof} We just prove (14), the remaining ones are similar. Using Koszul formula, we have
\begin{align*}
2g_{\delta,\sigma}(\bar{\nabla}_{X^h} Y^h,Z^h)&=X^hg_{\delta,\sigma}(Y^h,Z^h)+Y^hg_{\delta,\sigma}(X^h,Z^h)-Z^hg_{\delta,\sigma}(X^h,Y^h)
\nonumber \\
&+g_{\delta,\sigma}([X^h,Y^h],Z^h)+g_{\delta,\sigma}([Z^h,X^h],Y^h)\\
&-g_{\delta,\sigma}([Y^h,Z^h],X^h).
\nonumber \\
\end{align*}
Using (1), (7) and (12) gives us
\begin{align*}
2g_{\delta,\sigma}(\bar{\nabla}_{X^h} Y^h,Z^h)&=X^h(\alpha)g(Y,Z)+ \alpha Xg(Y,Z)+Y^h(\alpha)g(X,Z)\\
&+\alpha Yg(X,Z)-Z^h(\alpha)g(X,Y)-\alpha Zg(X,Y)+\alpha g([X,Y],Z)\\
&+\sigma g(R(X,Y)u,Z)+ \alpha g([Z,X]Y)+\sigma g(R(Z,X)u,Y)\\
&-\alpha g([Y,Z],X)- \sigma g(R(Y,Z)u,X).
\end{align*}
Using the properties of the Levi-Civita connection of $g$ and the compatibility of $g$ with it, one can get
\begin{align*}
2g_{\delta,\sigma}(\bar{\nabla}_{X^h} Y^h,Z^h)&=g(X^h(\alpha)Y,Z)+g(Y^h(\alpha)X,Z)-Z^h(\alpha) g(X,Y)\\
&+2\alpha g(\nabla _XY,Z)+ \sigma g(R(X,Y)u,Z)+ \sigma g(R(Z,X)u,Y)\\
&-\sigma g(R(Y,Z)u,X).
\end{align*}
Taking into account (7) and the Bianchi’s first identity, we have
\begin{align*}
2g_{\delta,\sigma}(\bar{\nabla}_{X^h} Y^h,Z^h)&=g_{\delta,\sigma}(\frac{1}{\alpha}X^h(\alpha)Y^h+\frac{1}{\alpha}Y^h(\alpha)X^h-g(X,Y)\bar{\nabla}\alpha +2 (\nabla _X Y)^h\\
&-\frac{2\sigma}{\alpha}(R(u,X)Y)^h,Z^h),
\end{align*}
so the horizontal component of $\bar{\nabla}_{X^h} Y^h$ is
\begin{align*}
h(\bar{\nabla}_{X^h} Y^h)&=\frac{1}{2\alpha}X^h(\alpha)Y^h+\frac{1}{2\alpha}Y^h(\alpha)X^h-\frac{1}{2}g(X,Y)h(\bar{\nabla}\alpha) + (\nabla _X Y)^h\\
&-\frac{\sigma}{\alpha}(R(u,X)Y)^h,
\end{align*}
where $\bar{\nabla}\alpha=h(\bar{\nabla}\alpha)+v(\bar{\nabla}\alpha)$ is the spliting of the gradient vector field of $\alpha$ with respect to $g_{\delta,\sigma}$ to horizontal and vertical components, respectively. Similarly the vertical component of $\bar{\nabla}_{X^h} Y^h$ is
\begin{align*}
v(\bar{\nabla}_{X^h} Y^h)&=-\frac{1}{2\delta}X^h(\sigma)Y^v-\frac{1}{2\delta}Y^h(\sigma)X^v-\frac{1}{2}g(X,Y)v(\bar{\nabla}\alpha)-\frac{\sigma}{\delta}(\nabla _X Y)^v\\
&-\frac{1}{2}(R(X,Y)u)^v.
\end{align*}
Using the equation $(\bar{\nabla}_{X^h} Y^h)=h(\bar{\nabla}_{X^h} Y^h)+v(\bar{\nabla}_{X^h} Y^h)$, the proof is completed.
\end{proof}

\section{Tension tensor field of $X:(M,g)\longrightarrow (TM,g_{\delta,\sigma})$}
This section is devoted to calculate the tension tensor field of $X:(M,g)\longrightarrow (TM,g_{\delta,\sigma})$. The methods and the basic definitions and lemmas that we will use, can be found in \cite{dragomir}.

According to definition, the tension tensor field of $X$ is given by
\begin{align}
\tau (X)=tr_g(\beta),
\end{align}
where $\beta \in \Gamma(T^*M\otimes T^*M \otimes X^{-1}TTM)$ defined by
\begin{align}
\beta(Z,W)=(X^{-1}\bar{\nabla})_ZX_*W-X_*(\nabla _ZW) \hspace{5mm} Z,W \in \Gamma (TM),
\end{align}
the so called the second fundamental form of $X$.
 
 For determining of the expression of the tension tensor field of $X$, we begin with some lemmas.
 
 \begin{lemma}\cite{dragomir} let $V,X:M\longrightarrow TM$ be any smooth vector field. Then
 \begin{align}
 (\nabla_{\frac{\partial}{\partial x^i}}X)^k=\nabla _i \lambda ^k=\frac{\partial \lambda^k}{\partial x^i}+N_i^k, \hspace{4mm} X_*V=V^h+(\nabla _VX)^v=V^i(\partial _i+\frac{\partial \lambda ^j}{\partial x^i} \dot{\partial}_j),
 \end{align}
 where $ (\nabla_{\frac{\partial}{\partial x^i}}X)^k$ is the k-th component of covariant derivative of $X=\lambda ^j \frac{\partial}{\partial x^j}$ along $\frac{\partial}{\partial x^i}$ and $X_*$ is the derivative of $X$ as a map from $M$ to $TM$.
 \end{lemma}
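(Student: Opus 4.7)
The plan is to establish both identities by direct coordinate computation, exploiting the local description of $X:M\to TM$ as the map $x\mapsto (x,\lambda^1(x),\dots,\lambda^n(x))$ in adapted coordinates.

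First, for the covariant-derivative formula, I would simply apply $\nabla_{\partial/\partial x^i}$ to the expansion $X=\lambda^j\,\partial/\partial x^j$ using Leibniz and the definition of the Christoffel symbols:
\begin{align*}
\nabla_{\partial/\partial x^i}X=\frac{\partial\lambda^k}{\partial x^i}\frac{\partial}{\partial x^k}+\lambda^j\Gamma^{k}_{ij}\frac{\partial}{\partial x^k},
\end{align*}
so that the $k$-th component is $\frac{\partial\lambda^k}{\partial x^i}+N_i^k$ with $N_i^k:=\lambda^j\Gamma^{k}_{ij}$, in agreement with the nonlinear connection notation used in \cite{dragomir}.

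Second, for the differential $X_*V$, I would compute by the chain rule: since $X$ has the coordinate form above, $X_*(\partial/\partial x^i)=\partial_i+(\partial\lambda^j/\partial x^i)\dot\partial_j$, where $\dot\partial_j$ is the vertical frame on $TM$. Linearity in $V=V^i\partial/\partial x^i$ then yields the rightmost expression in the lemma. To rewrite this geometrically as $V^h+(\nabla_VX)^v$, I would invoke the standard local description of the horizontal lift at a point $(p,u)\in TM$: for $u=u^k\partial_k$ one has $(\partial_i)^h_{(p,u)}=\partial_i-u^j\Gamma^{k}_{ij}\dot\partial_k$. Evaluating at $u=X(p)=\lambda^j\partial_j$ gives $(\partial_i)^h|_{(p,X(p))}=\partial_i-N_i^k\dot\partial_k$, so
\begin{align*}
X_*\frac{\partial}{\partial x^i}=(\partial_i)^h+N_i^k\dot\partial_k+\frac{\partial\lambda^k}{\partial x^i}\dot\partial_k=(\partial_i)^h+(\nabla_i\lambda^k)\,\dot\partial_k=\left(\frac{\partial}{\partial x^i}\right)^h+\bigl(\nabla_{\partial/\partial x^i}X\bigr)^v,
\end{align*}
and multiplying by $V^i$ and summing closes out the identity.

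There is essentially no analytic obstacle here; the only subtlety is bookkeeping, namely recognizing that the Christoffel correction implicit in the horizontal lift at the point $(p,X(p))$ is exactly the term $N_i^k\dot\partial_k$ that must be added to the naïve pushforward $\partial_i+(\partial\lambda^j/\partial x^i)\dot\partial_j$ so that the remaining vertical piece reassembles into the covariant derivative $\nabla_i\lambda^k$. Once this cancellation is observed, the global coordinate-free identity $X_*V=V^h+(\nabla_VX)^v$ drops out immediately by $C^\infty(M)$-linearity in $V$.
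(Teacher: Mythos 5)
Your computation is correct and complete. The paper itself gives no proof of this lemma --- it is simply quoted from \cite{dragomir} --- so there is no internal argument to compare against; your direct coordinate computation (Leibniz rule for the first identity, chain rule plus the local form of the horizontal lift for the second) is the standard derivation and fully establishes both claims. One remark worth recording: the identity $X_*V=V^h+(\nabla_VX)^v$ holds precisely with the horizontal-lift convention you use, namely $(\partial_i)^h_{(p,u)}=\partial_i-u^l\Gamma^k_{il}\dot{\partial}_k$, so that at $u=X(p)$ one has $\delta_i=\partial_i-N_i^k\dot{\partial}_k$ with $N_i^k=\Gamma^k_{il}\lambda^l$. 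Later, in the proof of Lemma 4.2, the paper writes $\delta_i=\partial_i+N_i^j\dot{\partial}_j$ while also using $N_i^k=\Gamma^k_{il}\lambda^l$; taken literally these two statements are incompatible with the present lemma (one sign must be flipped). Your bookkeeping --- the cancellation of the Christoffel correction hidden in the horizontal lift against the $N_i^k$ term of $\nabla_i\lambda^k$ --- is exactly the check that pins down the consistent sign choice, so your version is the one that should be read into the paper.
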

 with a simple calculation, we have
\begin{align}
&\bar{\nabla}_{V^h}V^h=V(V^j) \delta_j + V^iV^j \lbrace \bar {\nabla}_{\partial _i} \partial _j-N_i^k \bar{ \nabla}_{\dot{ \partial}_k} \partial _j -N_j^l  \bar {\nabla} _{ \partial _i} \dot{ \partial }_l 
\nonumber \\
&+ (N_i^k \Gamma_{jk}^l- \lambda ^k \frac{\partial \Gamma_{jk}^l }{ \partial x^i }) \dot{\partial}_l +N_i^kN_j^l  \bar{\nabla}_{\dot{\partial}_k} \dot{\partial}_l \rbrace ,\\
&\bar{\nabla}_{V^h}(\nabla_VX)^v=V^i\frac{\partial}{\partial x^i}(\nabla _VX)^j \dot{\partial}_j +V^i(\nabla _VX)^j(\bar{\nabla}_{\partial _i } \dot{\partial}_j - N_i^k \bar{\nabla}_{\dot{\partial _k}} \dot{\partial}_j ),\\
&\bar{\nabla}_{(\nabla_VX)^v}V^h=V^i(\nabla _{ \partial ^i }X)^k V^j \lbrace { \bar{\nabla}_{\dot { \partial }_k} \partial _j - \Gamma _{jk} ^l \dot{\partial}_l- N_j ^l \bar{\nabla}_{\dot{{\partial}_k}} \dot{\partial _l} } \rbrace ,\\
&\bar{\nabla}_{(\nabla_VX)^v} (\nabla_VX)^v=(\nabla_VX)^k(\nabla_VX)^l \bar{\nabla}_{\dot{{\partial}_k}} \dot{\partial _l},
\end{align}
equation (21) is because $ \dot{ \partial }_i(\nabla_VX)^k=0$.

To calculate the tension tensor field of $X$, we need to define an important lemma specified as lemma 4.2. 
\begin{lemma}Let $X=\lambda ^i \frac{\partial}{\partial x^i }$ and $V=V^i \frac{\partial}{\partial x^i }$ be smooth vector fields on $M$. Then
\begin{align}
(X^{-1} \bar{\nabla})_VX_*V&=\bar{\nabla}_{V^h}V^h+\bar{\nabla}_{V^h}(\nabla _V X)^v
\nonumber \\
&+\bar{\nabla}_{(\nabla _V X)^v} V^h +\bar{\nabla}_{(\nabla _V X)^v}(\nabla _V X)^v.
\end{align}
\end{lemma}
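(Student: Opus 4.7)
The plan is to reduce the left-hand side to an ordinary covariant derivative on $TTM$ by recognizing $X_{*}V$ as the restriction along $X$ of a globally defined vector field on $TM$, and then to expand by bilinearity. The enabling general principle, readable off the coordinate definition of the pullback connection given in the preliminaries, is that whenever a section $W\in\Gamma(X^{-1}TTM)$ comes from a vector field $\widetilde W\in\Gamma(TTM)$ in the sense that $W_p=\widetilde W|_{X(p)}$ for all $p\in M$, one has
\begin{equation*}
(X^{-1}\bar{\nabla})_{V} W \;=\; \bigl(\bar{\nabla}_{X_{*}V}\widetilde W\bigr)\circ X .
\end{equation*}
I would first verify this in one line: writing $\widetilde W=\widetilde W^{\beta}(\partial/\partial y^{\beta})$, so $W^{\beta}=\widetilde W^{\beta}\circ X$, the chain rule turns $\partial W^{\gamma}/\partial x^{i}+W^{\beta}(\partial X^{\alpha}/\partial x^{i})(\Gamma^{N})^{\gamma}_{\alpha\beta}\!\circ\! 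X$ into exactly the coefficients of $\bar{\nabla}_{X_{*}(\partial/\partial x^{i})}\widetilde W$.

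The decisive input is then Lemma 4.1, which identifies $X_{*}V=V^{h}+(\nabla_{V}X)^{v}$. Since $V^{h}$ and $(\nabla_{V}X)^{v}$ are bona fide global vector fields on $TM$ (horizontal and vertical lifts of vector fields on $M$), their sum
\begin{equation*}
\widetilde W \;:=\; V^{h}+(\nabla_{V}X)^{v}\;\in\;\Gamma(TTM)
\end{equation*}
is a global extension of the section $X_{*}V$. Applying the extension principle with $W=X_{*}V$ and $\widetilde W$ just defined, and then using the $\mathbb{R}$-bilinearity of $\bar{\nabla}$ in both arguments, I immediately split
\begin{equation*}
(X^{-1}\bar{\nabla})_{V}X_{*}V \;=\; \bar{\nabla}_{V^{h}+(\nabla_{V}X)^{v}}\bigl(V^{h}+(\nabla_{V}X)^{v}\bigr)
\end{equation*}
into the four summands appearing on the right-hand side of (23).

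The only genuine obstacle is a conceptual one: $X_{*}V$ plays two roles simultaneously---it is a section of the pullback bundle $X^{-1}TTM$, on which only $X^{-1}\bar{\nabla}$ acts, and it is also the restriction of a vector field on the total space $TM$, to which $\bar{\nabla}$ applies directly---and one must carefully flag the bridge between the two. Once that identification is in hand, no actual computation remains; the formulas (10)--(13) of Theorem 3.1 can then be substituted term by term later when one wishes to read off the components of $\tau(X)$. An alternative, more mechanical path would be to expand both sides directly in the coordinates $(x^{i},y^{j})$ using equations (18)--(22) and the Christoffel definition of $X^{-1}\bar{\nabla}$, but this is considerably longer and obscures the structural content of the identity.
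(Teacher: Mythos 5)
Your argument is correct, and it takes a genuinely different route from the paper. The paper proves the lemma by brute-force coordinate computation: it expands $(X^{-1}\bar{\nabla})_V X_*V$ using the coordinate form of $X_*V$ from Lemma 4.1, adds and subtracts the term $V^iV^j\frac{\partial}{\partial x^j}(\Gamma_{il}^k\lambda^l)\dot{\partial}_k$, uses $N_i^k=\Gamma_{il}^k\lambda^l$ along the image of $X$, and then matches the result term by term against the coordinate expansions (18)--(21) of the four summands on the right-hand side. You instead isolate the structural fact doing all the work: the pullback connection satisfies $(X^{-1}\bar{\nabla})_V W=(\bar{\nabla}_{X_*V}\widetilde W)\circ X$ whenever the section $W$ of $X^{-1}TTM$ is the restriction along $X$ of a genuine vector field $\widetilde W$ on $TM$ --- a one-line chain-rule consequence of the coordinate definition of $f^{-1}\nabla^N$ given in the preliminaries (extended to general sections by the Leibniz rule, which you use implicitly and should perhaps flag). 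Since $V^h$ and $(\nabla_V X)^v$ are globally defined horizontal and vertical lifts on $TM$, Lemma 4.1 exhibits exactly such an extension $\widetilde W=V^h+(\nabla_V X)^v$ of $X_*V$, and bilinearity of $\bar{\nabla}$ finishes the proof. Your approach is shorter and makes the content of the identity transparent (it is pure naturality of the pullback connection plus the splitting of $X_*$); the paper's computation is longer but generates en route the explicit coordinate expressions (18)--(21) that are reused in the subsequent calculation of $\beta_X(V,V)$ and the tension field, which is presumably why the authors chose it. Both are valid proofs of the stated identity.
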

\begin{proof} Using (17) gives us
\begin{align*}
(X^{-1} \bar{\nabla})_VX_*V=(X^{-1} \bar{\nabla})_V V^i(\partial _i +\frac{\partial \lambda ^j}{\partial x^i} \dot{\partial}_j).
\end{align*}
According to the deffinition of covariant derivative, one can write
\begin{align*}
(X^{-1} \bar{\nabla})_VX_*V&=V(V^i)(\partial _i + \frac{\partial \lambda ^j}{\partial x^i} \dot{\partial}_j)+V^iV^j (X^{-1} \bar{\nabla})_{\frac{\partial}{\partial x^j}}(\partial _i+ \frac{\partial \lambda ^k}{\partial x^i} \dot{\partial}_k).
\end{align*}
Using the diffinition of $X^{-1}\bar{\nabla}$ and the equation $\delta _i= \partial _i + N_i ^j \dot{\partial}_j$, one can write 
\begin{align}
(X^{-1} \bar{\nabla})_VX_*V&=V(V^j) \delta _j +V^k \frac{\partial V^i}{\partial x^k}(N_i^j + \frac{\partial \lambda ^j}{\partial x^i})\dot{\partial}_j + V^i V^j \frac{\partial ^2 \lambda ^k}{\partial x^i \partial x^j} \dot{\partial}_k 
\nonumber \\
&+V^iV^j \lbrace \bar{\nabla}_{\partial _j} \partial _i + \frac{\partial \lambda ^k}{\partial x^j} \bar{\nabla}_{\dot{\partial}_k} \partial _i + \frac{\partial \lambda ^k}{\partial x^i} \bar{\nabla}_{\partial _j} \dot{\partial}_k + \frac{\partial \lambda ^s}{\partial x^j} \frac{\partial \lambda ^k}{\partial x^i} \bar{\nabla}_{\dot{\partial}_s} \dot{\partial}_k \rbrace .
\end{align}
With adding/subtracting $V^iV^j \frac{\partial}{\partial x^j}(\Gamma_{il}^k \lambda ^l) \dot{\partial }_k$ in right hand side of equation (23), one can get
\begin{align*}
(X^{-1} \bar{\nabla})_VX_*V&=V(V^j)\delta _j + V^k \frac{\partial V^i}{\partial x^k}(\nabla _i \lambda ^j) \dot{\partial}_j +V^iV^j \frac{\partial}{\partial x^j} (N_i^k+\frac{\partial \lambda ^k}{\partial x^i}) \dot{\partial}_k\\
&+V^iV^j \lbrace \bar{\nabla}_{\partial _i} \partial _j + (\nabla _j \lambda ^k ) \bar{\nabla}_{\dot{\partial}_k} \partial _i -N_j^k \bar{\nabla}_{\dot{\partial}_k} \partial _i +(\nabla _i \lambda ^k) \bar{\nabla}_{\partial _j} \dot{\partial}_k \\
&-N_i^k \bar{\nabla}_{\partial _j} \dot{\partial}_k +\frac{\partial \lambda ^s}{\partial x^j} \frac{\partial \lambda ^k}{\partial x^i} \bar{\nabla}_{\dot{\partial}_s} \dot{\partial}_k \rbrace ,
\end{align*}
where we used the term $N_i^k=\Gamma _{il}^k \lambda ^l$. Using the equation$ \frac{\partial}{\partial x^j} (\Gamma_{il}^k \lambda ^l )=\lambda ^l \frac{\partial \Gamma _{il}^k}{\partial x^j}+\Gamma _{il}^k \frac{\partial \lambda ^l}{\partial x^j}$ and (17), we have
\begin{align*}
(X^{-1} \bar{\nabla})_VX_*V&=V(V^j) \delta _j +V^iV^j \lbrace \bar{\nabla}_{\partial _i} \partial _j -N_j^k \bar{\nabla}_{\partial _i} \dot{\partial}_k - N_i^k\bar{\nabla}_{\dot{\partial}_k} \partial _j \rbrace \\
& -V^iV^j\lambda ^l \frac{\partial \Gamma_{il}^k}{\partial x^j} \dot{\partial}_k +V^iV^j\Gamma _{il}^kN_j^l \dot{\partial}_k -V^iV^j \Gamma _{il}^k( \nabla _j \lambda ^l) \dot{\partial}_k\\
&+V^j\frac{\partial V^i}{\partial x^j}(\nabla _i \lambda ^k) \dot{\partial}_k + V^iV^j \frac{\partial}{\partial x^j}(\nabla _i \lambda ^k) \dot{\partial}_k +V^iV^j\lbrace (\nabla _j \lambda ^k) \bar{\nabla}_{\dot{\partial}_k} \partial _i\\
&(\nabla _i \lambda ^k) \bar{\nabla}_{\partial _j} \dot{\partial}_k +\frac{\partial \lambda ^s}{\partial x^j} \frac{\partial \lambda ^k}{\partial x^i} \bar{\nabla}_{\dot{\partial}_s} \dot{\partial}_k\rbrace .
\end{align*}
Substituting (18), (19), (20) and (21) in above equation completes the proof.
\end{proof}
For calculating the tension tensor field of $X$, we need to calculate the second fundamental form of $X$ at $(V,V)$. 
\begin{align}
\beta _X (V,V)&=(X^{-1} \bar{\nabla})_VX_*V - X_*(\nabla _V V)
\nonumber \\
&=\frac{1}{\alpha} \lbrace ( V^h(\alpha ) + (\nabla _V X)^v (\alpha ))V - \sigma R(X,V)V-(V^h(\sigma )
\nonumber \\
&+ (\nabla _V X)^v (\sigma )) \nabla _V X - \sigma \nabla _V \nabla _V X + \delta R(X,\nabla _V X)V  \rbrace ^h
\nonumber \\
&+\frac{1}{\delta} \lbrace -(V^h(\sigma ) +(\nabla _VX)^v (\sigma ) )V- \sigma \nabla _VV + (V^h(\delta)
\nonumber \\
&+(\nabla _VX)^v(\delta )) \nabla _VX+\delta \nabla _V \nabla _V X - \delta \nabla _{\nabla _VV}X \rbrace ^v
\nonumber \\
&-\frac{1}{2}g(V,V) \bar{\nabla}\alpha +g(V,\nabla _VX) \bar{\nabla} \sigma -\frac{1}{2} g(\nabla _VX,\nabla _VX)\bar{\nabla}\delta
\end{align}
where we used (11), (12), (13) and (14) in (22) and the equation $X_*(\nabla _V V)=(\nabla _V V)^h +(\nabla _{\nabla _V V}X)^v$.

So after long calculations we have the following theorem.

\begin{theorem} Let $(M,g)$ be a Riemannian manifold, not necessarily compact, and $X \in \Gamma TM$ be a tangent vector field on $M$ and $\lbrace  V_1,...,V_n\rbrace $ be a locally orthonormal basis for $TM$. The tension tensor field of $X$, i.e., $\tau \in \Gamma X^{-1} TT(M)$, is given by
\begin{align}
\tau (X)&=
\nonumber \\
&=\frac{1}{\alpha} \lbrace \pi _*(\bar{\nabla}\alpha ) + \sum _{i=1}^n [ (\nabla _{V_i} X)^v (\alpha ))V_i] - \sigma Ric(X)- \sum _{i=1}^n[(V_i^h(\sigma )
\nonumber \\
&+ (\nabla _{V_i} X)^v (\sigma ) \nabla _{V_i} X] - \sigma tr_g(\nabla _. \nabla _. X )+ \delta  tr_g R(X,\nabla _. X).  \rbrace ^h
\nonumber \\
&+\frac{1}{\delta} \lbrace - \sum _{i=1}^n[(V_i^h(\sigma ) +(\nabla _{V_i}X)^v (\sigma ) )V_i]- \sigma \sum _{i=1}^n \nabla _{V_i}V_i 
\nonumber \\
&+\sum _{i=1}^n [(V_i^h(\delta)+(\nabla _{V_i}X)^v(\delta )) \nabla _{V_i}X]+\delta \Delta _gX \rbrace ^v
\nonumber \\
&-\frac{n}{2}\bar{\nabla} \alpha +div(X) \bar{\nabla} \sigma - \frac{1}{2} g(\nabla X,\nabla X)\bar{\nabla}\delta 
\end{align}
\end{theorem}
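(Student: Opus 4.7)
The definition $\tau(X) = \operatorname{tr}_g \beta_X$ reduces the theorem to evaluating $\sum_{i=1}^n \beta_X(V_i, V_i)$ using the already-computed expression (24). My plan is to substitute $V = V_i$ into each term of (24), apply $\sum_{i=1}^n$, and identify the resulting sums with standard geometric invariants on $M$. Because (24) is already decomposed as a horizontal lift plus a vertical lift plus gradients of the scalar functions $\alpha, \sigma, \delta$, and each of these pieces is tensorial in $V$, the trace splits cleanly into three parallel bookkeeping tasks.

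In the horizontal bracket I would use $\sum_i R(X, V_i) V_i = \operatorname{Ric}(X)$ for the Ricci term, $\sum_i \nabla_{V_i}\nabla_{V_i} X = \operatorname{tr}_g \nabla_\cdot \nabla_\cdot X$ for the second-order contribution, and $\sum_i R(X, \nabla_{V_i} X) V_i$ to produce the stated trace $\operatorname{tr}_g R(X, \nabla_\cdot X)\cdot$. For $\sum_i V_i^h(\alpha) V_i$ one uses Lemma 3.2 together with the fact that, evaluated along the section $X$, the horizontal directional derivatives of $\alpha$ in the frame $\{V_i\}$ reassemble into $\pi_*(\bar\nabla \alpha)$, matching the first summand of the stated formula. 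In the vertical bracket the key identification is $\sum_i \nabla_{V_i}\nabla_{V_i} X = \Delta_g X$, using the rough-Laplacian convention so that a possible $\nabla_{\nabla_{V_i} V_i} X$ correction is absorbed (equivalently, one works in a frame normal at the reference point). The remaining sums containing $V_i^h(\sigma)$, $(\nabla_{V_i} X)^v(\sigma)$, $V_i^h(\delta)$, $(\nabla_{V_i} X)^v(\delta)$ do not collapse further and pass to the statement unchanged. For the three scalar coefficients of $\bar\nabla \alpha$, $\bar\nabla \sigma$, $\bar\nabla \delta$ one applies $\sum_i g(V_i, V_i) = n$, $\sum_i g(V_i, \nabla_{V_i} X) = \operatorname{div}(X)$, and $\sum_i g(\nabla_{V_i} X, \nabla_{V_i} X) = g(\nabla X, \nabla X)$ respectively, yielding the last line of the stated formula.

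The main obstacle is purely organizational. No new geometric ingredient is required beyond (24) and the standard definitions of $\operatorname{Ric}$, $\Delta_g$, $\operatorname{div}$, and $\pi_*$; the difficulty lies in keeping the horizontal, vertical, and gradient contributions aligned across the summation and in correctly interpreting the cross-terms that mix $V_i$ with $\nabla_{V_i} X$. Some of these trace into curvature invariants of $M$, while others remain as general sums precisely because the scalar functions $\alpha, \sigma, \delta$ are defined on $TM$ and depend on the point of $M$ only through the argument $X$, so no further collapse occurs. Carefully separating "functions on $M$" from "functions on $TM$ evaluated along $X$" in the directional derivative terms will be the decisive step.
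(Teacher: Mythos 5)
Your proposal follows the same route as the paper: apply $\tau(X)=\operatorname{tr}_g\beta_X=\sum_{i=1}^n\beta_X(V_i,V_i)$ to the already-established formula (24) with $V=V_i$, then collapse the sums via $\sum_i R(X,V_i)V_i=\operatorname{Ric}(X)$, $\sum_i V_i^h(\alpha)V_i^h=h(\bar\nabla\alpha)$, $\sum_i g(V_i,\nabla_{V_i}X)=\operatorname{div}(X)$, and the identification of $\sum_i[\nabla_{V_i}\nabla_{V_i}X-\nabla_{\nabla_{V_i}V_i}X]$ with the Laplacian term, exactly as in the paper's proof of Theorem 4.3. The argument is correct and essentially identical to the paper's.
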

\begin{proof}
Using (15) gives us
\begin{align}
\tau (X)&= \sum _{i=1}^n \beta _X(V_i ,V_i) 
\nonumber \\
&=\sum _{i=1}^n \big{\lbrace } \frac{1}{\alpha} \lbrace ( V_i^h(\alpha ) + (\nabla _{V_i }X)^v (\alpha ))V_i - \sigma R(X,V_i)V_i-(V_i^h(\sigma )
\nonumber \\
&+ (\nabla _{V_i} X)^v (\sigma )) \nabla _{V_i }X - \sigma \nabla _{V_i }\nabla _{V_i }X + \delta R(X,\nabla _{V_i }X)V_i  \rbrace ^h
\nonumber \\
&+\frac{1}{\delta} \lbrace -(V_i^h(\sigma ) +(\nabla _{V_i}X)^v (\sigma ) )V_i- \sigma \nabla _{V_i}V_i + (V_i^h(\delta)
\nonumber \\
&+(\nabla _{V_i}X)^v(\delta )) \nabla _{V_i}X+\delta \nabla _{V_i }\nabla _{V_i} X - \delta \nabla _{\nabla _{V_i}V_i}X \rbrace ^v
\nonumber \\
&-\frac{1}{2}g(V_i,V_i) \bar{\nabla}\alpha +g(V_i,\nabla _{V_i}X) \bar{\nabla} \sigma -\frac{1}{2} g(\nabla _{V_i}X,\nabla _{V_i}X)\bar{\nabla}\delta   \big{\rbrace} .
\end{align}
By substituting the following equations in (26) completes the proof. 
\begin{align*}
& \sum _{i=1}^n V_i^h (\alpha ) V_i^h =\sum _{i=1}^ng_{\delta ,\sigma}(\bar{\nabla} \alpha , V_i^h)V_i^h=h(\bar{\nabla} \alpha)\\
& \sum _{i=1}^n R(X,V_i)V_i=Ric(X)\\
& \sum _{i=1}^n \nabla _{V_i} \nabla _{V_i} X= tr_g \nabla _. \nabla _. X\\
&\sum _{i=1}^nR(X, \nabla _{V_i}X)V_i=tr_g R(X,\nabla _. X).\\
&\sum _{i=1}^n [ \nabla _{V_i} \nabla _{V_i}X - \nabla _{\nabla_{V_i} V_i}X ] =\Delta _g X
\end{align*}
\end{proof}
We end this section with a basic theorem in harmonic vector fields, which is very important to calculate the critical points of energy functional.

\begin{theorem} let $(M,g)$ be a compact orientable Riemannian manifold and $E: \Gamma (TM) \longrightarrow R^+$ the energy functional restricted to the space of all vector fields. Moreover, let $X_t :(-\varepsilon , + \varepsilon ) \longrightarrow TM $ be a variation along $X$ and $V$ be its variational vector field. If the tangent bundle of $M$ is equipped with an arbitrary Riemannian metric $G$, Then
\begin{align}
\frac{d}{dt} \lbrace E(X_t) \rbrace \mid _{t=0}=- \int _M g_{\delta , \sigma } (V^v,\tau (X)) \hspace{1mm} dvol(g) ,
\end{align}
where $X$ is supposed as a map from $(M,g)$ to $(TM,G)$.
\end{theorem}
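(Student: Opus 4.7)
The plan is to follow the classical first-variation-of-energy template from harmonic map theory, treating $X_t:(M,g)\to (TM,G)$ as a one-parameter family of smooth maps and exploiting the pullback connection $X_t^{-1}\bar\nabla$ on $X_t^{-1}TTM$. First I would differentiate under the integral sign:
\begin{align*}
\frac{d}{dt}E(X_t)\Big|_{t=0}
=\frac{1}{2}\int_M \frac{d}{dt}\Big|_{t=0}\sum_{i=1}^n G\bigl((X_t)_*V_i,(X_t)_*V_i\bigr)\,dvol(g),
\end{align*}
where $\{V_1,\dots,V_n\}$ is a local orthonormal frame on $M$, and rewrite the $t$-derivative using the metric-compatibility of $\bar\nabla$ pulled back by $X_t$. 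Extending the variation to a map $\mathcal{U}:M\times(-\varepsilon,\varepsilon)\to TM$ and pulling back $\bar\nabla$, this becomes
\begin{align*}
\frac{d}{dt}E(X_t)\Big|_{t=0}=\int_M\sum_{i=1}^n G\bigl((X^{-1}\bar\nabla)_{\partial_t}(X_t)_*V_i,\ X_*V_i\bigr)\Big|_{t=0}\,dvol(g).
\end{align*}

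Next I would use torsion-freeness of $\bar\nabla$ together with $[\partial_t,V_i]=0$ (viewing $V_i,\partial_t$ as commuting vector fields on $M\times(-\varepsilon,\varepsilon)$) to swap the covariant derivatives:
\begin{align*}
(X^{-1}\bar\nabla)_{\partial_t}(X_t)_*V_i\big|_{t=0}
=(X^{-1}\bar\nabla)_{V_i}\mathcal{V},
\end{align*}
recognising $(X_t)_*\partial_t\big|_{t=0}=\mathcal{V}$ as the variational vector field. From the definition of a variation $X_t$ of $X$, $\pi\circ X_t=\operatorname{id}_M$, so $\pi_*\mathcal{V}=0$, meaning $\mathcal{V}$ is vertical; identifying the fiber $T_{X(p)}(T_pM)$ with $T_pM$ via the connection map $K$ identifies $\mathcal{V}$ with $V^v$, the vertical lift of some $V\in\Gamma TM$.

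The next step is the standard divergence trick. Define a $1$-form $\omega\in\Omega^1(M)$ by $\omega(Z)=G(\mathcal{V},X_*Z)$. Computing $\operatorname{div}(\omega^\sharp)=\sum_i\bigl(V_i(\omega(V_i))-\omega(\nabla_{V_i}V_i)\bigr)$ and expanding each term with metric compatibility yields
\begin{align*}
\operatorname{div}(\omega^\sharp)
=\sum_{i=1}^n G\bigl((X^{-1}\bar\nabla)_{V_i}\mathcal{V},X_*V_i\bigr)
+\sum_{i=1}^n G\bigl(\mathcal{V},\beta_X(V_i,V_i)\bigr),
\end{align*}
where $\beta_X$ is the second fundamental form of $X$ defined in (16). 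Rearranging and using $\tau(X)=\sum_i\beta_X(V_i,V_i)$ gives
\begin{align*}
\sum_{i=1}^n G\bigl((X^{-1}\bar\nabla)_{V_i}\mathcal{V},X_*V_i\bigr)
=\operatorname{div}(\omega^\sharp)-G(\mathcal{V},\tau(X)).
\end{align*}
Integrating over the compact orientable manifold $M$ and applying Stokes' theorem to kill the divergence term produces the claimed identity, with $G=g_{\delta,\sigma}$ and $\mathcal{V}=V^v$.

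The main obstacle is not analytic but notational: making sure the pullback-connection formalism is used correctly when interchanging derivatives and carefully tracking that the variational field $\mathcal{V}$ really is vertical, so that the resulting integrand $G(V^v,\tau(X))$ makes sense and matches the statement. Once the interchange $(X^{-1}\bar\nabla)_{\partial_t}(X_t)_*V_i=(X^{-1}\bar\nabla)_{V_i}\mathcal{V}$ is justified and the divergence trick is set up, the rest is a routine application of Stokes' theorem.
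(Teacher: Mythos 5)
Your argument is correct: it is the standard first-variation-of-energy computation (metric compatibility of the pullback connection, the symmetry swap $(\mathcal{U}^{-1}\bar\nabla)_{\partial_t}\mathcal{U}_*V_i=(\mathcal{U}^{-1}\bar\nabla)_{V_i}\mathcal{U}_*\partial_t$ from torsion-freeness and $[\partial_t,V_i]=0$, verticality of the variational field, and the divergence/Stokes step on the $1$-form $\omega(Z)=G(\mathcal{V},X_*Z)$), and each step checks out. Note, however, that the paper itself states this theorem with no proof at all (it is quoted as a known result, essentially from Dragomir--Perrone), so there is nothing in the paper to compare against; your write-up supplies the missing argument, and the only blemishes are cosmetic --- the derivative in $t$ should formally be taken with the connection pulled back along $\mathcal{U}$ on $M\times(-\varepsilon,\varepsilon)$ rather than along $X$, and the paper's own statement inconsistently writes $g_{\delta,\sigma}$ where it means the arbitrary metric $G$.
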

\section{Tension tensor field of unit vector fields from $(M,g)$ to $(S(M),g_{\delta , \sigma} )$ and harmonicity of them}
The spherical bundle $S(M)$ on $(M,g)$ at every point $p \in M$ is
\begin{align*}
S_p(M)= \lbrace v \in T_pM | g(v,v)=1 \rbrace .
\end{align*}

The tangent space of $S(M)$ is $\mathcal{H} \oplus \bar{\mathcal{V}}$ where $\mathcal{H} $ is the horizontal sub-bundle of $TTM$ with respect to $\nabla$ and $\bar{\mathcal{V}}$ is the vector bundle on $S(M)$ defined by
\begin{align}
\bar{\mathcal{V}}_{(p,u)}= \lbrace Y_p^v | g(Y_p,u)=0 \rbrace = \lbrace Y_p^v-g(Y_p,u)u^v |Y_p \in T_pM  \rbrace \quad (p,u) \in TM.
\end{align}
Assuming that  $N_{(p,u)}^{g_{\delta , \sigma }}= \frac{\frac{\sigma}{\alpha}u^h+u^v}{\sqrt{\sigma - \frac{2 \sigma ^2 }{\alpha}+ \delta}}$ is a vector field on $TM$, one can simply derive $N_{(p,u)}^{g_{\delta , \sigma }}$ is normal unit vector field to $T_{(p,u)} S(M)$ with respect to $ g_{\delta ,\sigma}$.

We equip $S(M)$ with induced metric $i^*g_{\delta , \sigma }$, where $i:S(M) \longrightarrow TM$ is the inclusion map. From \cite{dragomir},   the tension tensor field of $ X:(M,g) \longrightarrow (S(M),i^* g_{\delta ,\sigma })$ is the tangent part of $\tau (X)$ with respect to $g_{\delta , \sigma }$, i.e., $\tau _1(X)=tan \hspace{1mm} \tau (X)$. So, we have the following proposition.

\begin{proposition} Let $X:(M,g) \longrightarrow (S(M),i^*g_{\delta , \sigma })$ be a unit vector field on $M$. Then the tension tensor field of $X$ is
\begin{align}
\tau _1(X)= \tau (X) - g_{\delta , \sigma } (\tau (X) ,  \frac{\frac{\sigma}{\alpha}X^h+X^v}{\sqrt{\sigma - \frac{2 \sigma ^2 }{\alpha}+ \delta}} )  \frac{\frac{\sigma}{\alpha}X^h+X^v}{\sqrt{\sigma - \frac{2 \sigma ^2 }{\alpha}+ \delta}} .
\end{align}
\end{proposition}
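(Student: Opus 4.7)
The plan is to derive the formula as a direct consequence of the tangent/normal decomposition along the hypersurface $S(M) \subset TM$, using the fact already cited from \cite{dragomir} that $\tau_1(X) = \tan \tau(X)$, i.e., the tension tensor field of $X$ viewed into $(S(M), i^*g_{\delta,\sigma})$ is the $g_{\delta,\sigma}$-tangential component of $\tau(X)$ evaluated along the image of $X$. Since $S(M)$ has codimension one in $TM$, the $g_{\delta,\sigma}$-normal bundle of $S(M)$ is a real line bundle, so once a unit normal $N^{g_{\delta,\sigma}}$ is specified, the orthogonal projection formula
\begin{align*}
\tau(X) = \tau_1(X) + g_{\delta,\sigma}\bigl(\tau(X),N^{g_{\delta,\sigma}}\bigr)\,N^{g_{\delta,\sigma}}
\end{align*}
holds pointwise, and rearranging gives the claim.

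What remains is to justify that the vector field
\begin{align*}
N^{g_{\delta,\sigma}}_{(p,u)} = \frac{\tfrac{\sigma}{\alpha}u^h + u^v}{\sqrt{\sigma - \tfrac{2\sigma^2}{\alpha} + \delta}}
\end{align*}
announced just before the proposition is indeed a $g_{\delta,\sigma}$-unit normal to $T_{(p,u)}S(M)$. First I would verify orthogonality against the two pieces of the decomposition $T_{(p,u)}S(M) = \mathcal{H}_{(p,u)} \oplus \bar{\mathcal{V}}_{(p,u)}$. For an arbitrary horizontal lift $Y^h$, the metric formulas (5) and (6) give
\begin{align*}
g_{\delta,\sigma}\Bigl(\tfrac{\sigma}{\alpha}u^h + u^v,\, Y^h\Bigr) = \tfrac{\sigma}{\alpha}\,\alpha\, g(u,Y) - \sigma\, g(u,Y) = 0,
\end{align*}
while for a typical element $Y^v - g(Y,u)u^v$ of $\bar{\mathcal{V}}_{(p,u)}$, the formulas (6) and (7) together with $g(u,u)=1$ yield
\begin{align*}
g_{\delta,\sigma}\Bigl(\tfrac{\sigma}{\alpha}u^h + u^v,\, Y^v - g(Y,u)u^v\Bigr) = -\sigma\, g(u,Y) + \delta\, g(u,Y) - g(Y,u)\bigl(-\sigma + \delta\bigr) = 0.
\end{align*}

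Next I would expand the squared norm using (5)--(7) and the defining identity $\alpha\delta - \sigma^2 = 1$ of an isotropic almost complex structure to confirm that the denominator provides the correct normalization, so that $g_{\delta,\sigma}(N^{g_{\delta,\sigma}},N^{g_{\delta,\sigma}}) = 1$. With this in hand, setting $u = X_p$ (which lies in $S(M)$ because $X$ is a unit vector field) converts $N^{g_{\delta,\sigma}}_{(p,X_p)}$ into the expression appearing in the statement, and substitution into the orthogonal projection identity above produces exactly (29).

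The only real obstacle is the careful bookkeeping in the unit-norm computation, since the cross term involving $g_{\delta,\sigma}(u^h,u^v) = -\sigma$ must combine with the $\alpha$ and $\delta$ contributions in the right way; the identity $\alpha\delta - \sigma^2 = 1$ is what makes everything collapse to a clean normalization. The orthogonality check and the final assembly are then immediate from the metric formulas (5)--(7).
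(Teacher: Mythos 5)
Your proposal is correct and follows essentially the same route as the paper: the paper's proof simply invokes $\tau_1(X)=\tan\tau(X)$ together with the asserted unit normal and writes down the codimension-one orthogonal projection, while you additionally verify the orthogonality of $\tfrac{\sigma}{\alpha}u^h+u^v$ to $\mathcal{H}\oplus\bar{\mathcal{V}}$, which the paper leaves to the reader. One caveat on the step you deferred: carrying out the norm computation gives $g_{\delta,\sigma}\bigl(\tfrac{\sigma}{\alpha}u^h+u^v,\tfrac{\sigma}{\alpha}u^h+u^v\bigr)=\tfrac{\sigma^2}{\alpha}-\tfrac{2\sigma^2}{\alpha}+\delta=\tfrac{1}{\alpha}$ by $\alpha\delta-\sigma^2=1$, so the normalizing denominator as printed (first term $\sigma$ rather than $\tfrac{\sigma^2}{\alpha}$) appears to be a typo that your write-up would silently endorse rather than ``confirm''; this is harmless for the sequel since the paper immediately specializes to $\sigma=0$.
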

\begin{proof}
According to the definition of tension tensor field of $X$, $\tau _1(X)$ is tangent to spherical bundle at $X$, i.e.,  $\tau _1(X)\in T_X S(M)$. Since $ \frac{\frac{\sigma}{\alpha}X^h+X^v}{\sqrt{\sigma - \frac{2 \sigma ^2 }{\alpha}+ \delta}}$ is normal unit vector to $S(M)$ at $X$, we have
\begin{align*}
\tau _1(X)= \tau (X) - g_{\delta , \sigma } (\tau (X) ,  \frac{\frac{\sigma}{\alpha}X^h+X^v}{\sqrt{\sigma - \frac{2 \sigma ^2 }{\alpha}+ \delta}} )  \frac{\frac{\sigma}{\alpha}X^h+X^v}{\sqrt{\sigma - \frac{2 \sigma ^2 }{\alpha}+ \delta}} ,
\end{align*}
and the proof is completed.
\end{proof}

Hereafter, we assum that $\sigma =0$, and present the essential and sufficient conditions for harmonicity of unit vector field $X: (M,g) \longrightarrow (S(M), i^*g_{\delta ,0})$.

\begin{theorem}
If the horizontal and vertical sub-bundles are perpendicular to each other, i.e., $\sigma =0$, then the tension tensor field of unit vector field $X$ is given by
\begin{align}
\tau _1(X)&=\frac{1}{\alpha} \lbrace \pi _*(\bar{\nabla}\alpha ) + \sum _{i=1}^n [ (\nabla _{V_i} X)^v (\alpha ))V_i]  + \delta  tr_g R(X,\nabla _. X).  \rbrace ^h
\nonumber \\
&+\frac{1}{\delta} \lbrace \sum _{i=1}^n [(V_i^h(\delta)+(\nabla _{V_i}X)^v(\delta )) \nabla _{V_i}X]+\delta \Delta _gX 
\nonumber \\
& - [\delta g(\Delta _gX,X) -\frac{n}{2} d \alpha (X^v)-\frac{1}{2}g( \nabla X, \nabla X) d \delta (X^v)] X\rbrace ^v
\nonumber \\
&-\frac{n}{2}\bar{\nabla} \alpha  - \frac{1}{2} g(\nabla X,\nabla X)\bar{\nabla}\delta  ,
\end{align}
where $ \Delta _g X$ is the Laplacian \cite{dragomir} of $X$ defined by
\begin{align*}
- \Delta _g X= \sum _{i=1}^n [ \nabla _{V_i} \nabla _{V_i} X - \nabla _{\nabla _{V_i} V_i} X],
\end{align*}
and $\lbrace V_i \rbrace _{i=1} ^n$ is a locally orthonormal basis for $TM$.
\end{theorem}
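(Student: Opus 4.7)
The plan is to obtain the result by substituting $\sigma \equiv 0$ into the general expression for $\tau(X)$ in Theorem 4.3 and then applying the projection formula of Proposition 5.1 to extract the tangential component along $S(M)$. First I would set $\sigma = 0$ throughout equation (25); since $\sigma$ is identically zero on $TM$ one has $\bar{\nabla}\sigma = 0$, and every term carrying $\sigma$, $V_i^h(\sigma)$, or $(\nabla_{V_i} X)^v(\sigma)$ drops out at once. What survives is exactly the combination of horizontal, vertical, and gradient pieces that appear above and below the correction bracket in the theorem statement.

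Next, since the normal unit vector field from Proposition 5.1 reduces at $\sigma = 0$ to $N = X^v/\sqrt{\delta}$, the projection formula (29) becomes
\[
\tau_1(X) = \tau(X) - \frac{1}{\delta}\,g_{\delta,0}(\tau(X), X^v)\,X^v.
\]
To evaluate the pairing $g_{\delta,0}(\tau(X), X^v)$ I would exploit two structural features: first, equation (6) with $\sigma = 0$ makes the horizontal and vertical subbundles $g_{\delta,0}$-orthogonal, so the entire $\{\cdots\}^h$ block of $\tau(X)$ contributes nothing to the pairing; second, for the two gradient terms the defining property $g_{\delta,0}(\bar{\nabla}f, X^v) = X^v(f) = df(X^v)$ supplies the contributions $-\frac{n}{2}d\alpha(X^v)$ and $-\frac{1}{2}g(\nabla X,\nabla X)d\delta(X^v)$ directly. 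For the remaining $\{\cdots\}^v$ block, equation (7) yields $g_{\delta,0}(Y^v, X^v) = \delta\,g(Y, X)$.

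The essential simplification at this point is the unit condition: differentiating $g(X,X) \equiv 1$ along $V_i$ yields $g(\nabla_{V_i}X, X) = 0$, which annihilates the whole sum $\sum_i[V_i^h(\delta)+(\nabla_{V_i}X)^v(\delta)]g(\nabla_{V_i}X,X)$ that would otherwise appear in the pairing. What survives is precisely the bracket $\delta g(\Delta_g X, X) - \frac{n}{2}d\alpha(X^v) - \frac{1}{2}g(\nabla X,\nabla X)d\delta(X^v)$, and dividing by $\delta$ and lifting vertically inserts the announced correction $-[\cdots]X$ into the $\frac{1}{\delta}\{\cdots\}^v$ block exactly as stated. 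I expect no genuine obstacle: the argument is bookkeeping plus the pointwise unit constraint, and the only place where care is needed is in distinguishing the vertical lift of the vector $X$ (which is $X^v$) from the pointwise scalar $g(\Delta_g X, X)$ that multiplies it inside the bracket.
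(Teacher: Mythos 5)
Your proposal is correct and follows essentially the same route as the paper: substitute $\sigma=0$ into the general tension formula (25), then project off the normal $X^v/\sqrt{\delta}$ using (29), with the horizontal block dropping out by $g_{\delta,0}$-orthogonality and the sum term killed by $g(\nabla_{V_i}X,X)=0$. The only detail the paper makes explicit that you leave implicit is rewriting $g(\Delta_g X,X)$ via $\|\nabla X\|^2$, which happens only in the subsequent theorem, so nothing is missing here.
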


\begin{proof}
Substituting  $\sigma =0$ in (25) gives us
\begin{align}
\tau (X) &= \frac{1}{\alpha} \lbrace \pi _*(\bar{\nabla}\alpha ) + \sum _{i=1}^n [ (\nabla _{V_i} X)^v (\alpha ))V_i]  + \delta  tr_g R(X,\nabla _. X).  \rbrace ^h
\nonumber \\
&+\frac{1}{\delta} \lbrace \sum _{i=1}^n [(V_i^h(\delta)+(\nabla _{V_i}X)^v(\delta )) \nabla _{V_i}X]+\delta \Delta _gX \rbrace ^v
\nonumber \\
&- \frac{n}{2} \bar{\nabla} \alpha  - \frac{1}{2} g(\nabla X,\nabla X)\bar{\nabla}\delta  .
\end{align}
Taking into account (31) and $\sigma =0$ in (29) gives us
\begin{align*}
\tau _1 (X)&=\frac{1}{\alpha} \lbrace \pi _*(\bar{\nabla}\alpha ) + \sum _{i=1}^n [ (\nabla _{V_i} X)^v (\alpha ))V_i]  + \delta  tr_g R(X,\nabla _. X).  \rbrace ^h\\
&+\frac{1}{\delta} \lbrace \sum _{i=1}^n [(V_i^h(\delta)+(\nabla _{V_i}X)^v(\delta )) \nabla _{V_i}X]+\delta \Delta _gX \rbrace ^v\\
& - g _{\delta , 0} (\frac{1}{ \delta} \lbrace \sum _{i=1}^n [(V_i^h(\delta)+(\nabla _{V_i}X)^v( \delta )) \nabla _{V_i}X]+\delta \Delta _gX \rbrace ^v \\
& -\frac{n}{2}\bar{\nabla} \alpha  - \frac{1}{2} g(\nabla X,\nabla X)\bar{\nabla}\delta  , X^v) \frac{X^v}{ \delta} - \frac{n}{2} \bar{\nabla} \alpha  - \frac{1}{2} g(\nabla X,\nabla X)\bar{\nabla}\delta .
\end{align*}
Using the definition of $g_{\delta ,0}$ and the fact that $g(\nabla _{ V_i}X , X)=0$ for every $i=1,...,n$, we have
\begin{align*}
\tau _1 (X)&=\frac{1}{\alpha} \lbrace \pi _*(\bar{\nabla}\alpha ) + \sum _{i=1}^n [ (\nabla _{V_i} X)^v (\alpha ))V_i]  + \delta  tr_g R(X,\nabla _. X).  \rbrace ^h\\
&+\frac{1}{\delta} \lbrace \sum _{i=1}^n [(V_i^h(\delta)+(\nabla _{V_i}X)^v(\delta )) \nabla _{V_i}X]+\delta \Delta _gX \rbrace ^v\\
&-\frac{1}{\delta}[g( \delta   \Delta _g X ,X) - \frac{n}{2} d \alpha (X^v) - \frac{1}{2} g(\nabla X , \nabla X) d \delta (X^v) ] X^v \\
&- \frac{n}{2} \bar{\nabla} \alpha  - \frac{1}{2} g(\nabla X,\nabla X)\bar{\nabla}\delta  ,
\end{align*}
and the proof is completed.
\end{proof}
From \cite{dragomir}, we know that a map between Riemanian manifolds is harmonic if and only if the tension tensor field of it, is zero. So, the vector field $X:(M,g) \longrightarrow (S(M),i^*g_{\delta , 0})$ is a harmonic map if and only if
\begin{align*}
&\frac{1}{\alpha} \lbrace \pi _*(\bar{\nabla}\alpha ) + \sum _{i=1}^n [ (\nabla _{V_i} X)^v (\alpha ))V_i]  + \delta  tr_g R(X,\nabla _. X). \rbrace \\
&- \pi _*(\frac{n}{2} \bar{\nabla} \alpha  + \frac{1}{2} g(\nabla X,\nabla X)\bar{\nabla}\delta )=0 ,\\
&and\\
&\frac{1}{\delta} \lbrace \sum _{i=1}^n [(V_i^h(\delta)+(\nabla _{V_i}X)^v(\delta )) \nabla _{V_i}X]+\delta \Delta _gX -g( \delta   \Delta _g X ,X)\\
& + \frac{n}{2} d \alpha (X^v) + \frac{1}{2} g(\nabla X , \nabla X) d \delta (X^v) ] X \rbrace -K( \frac{n}{2} \bar{\nabla} \alpha +  \frac{1}{2} g(\nabla X,\nabla X)\bar{\nabla}\delta)=0 .
\end{align*}

According to a theorem \cite{dragomir} in harmonic theory on vector fields, if $G$ is an arbitrary Riemannian metric on $S(M)$ and $V$ is an orthogonal vector field to $X:(M,g) \longrightarrow (S(M), G)$ with respect to $g$, one can get
\begin{align}
\int _M G(\tau _1 (X),V^v) dvol(g)=0.
\end{align}
Relation (32) will be used in proof of main theorem. Morever, if $\mathcal{V}$ is variational vector field of an arbitrary variation of $X$ through unit vector fields, then $g(K (\mathcal{V}),X)=0$.

The following lemma is a famous lemma in harmonic theory.
\begin{lemma} A unit vector field $X:(M,g) \longrightarrow (S(M),G)$ is harmonic if and only if 
\begin{align}
\frac{d}{dt} \lbrace E(\mathcal{U}_t) \rbrace \mid _{t=0}=- \int _M G (V^v,\tau _1 (X)) \hspace{1mm} dvol(g)=0,
\end{align}
where $\mathcal{U}_t$ is a variation along $X$ through unit vector fields and $\mathcal{V}=V^v$ is its variation vector field.
\end{lemma}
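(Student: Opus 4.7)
The plan is to derive the integral identity by invoking the general first-variation formula of Theorem 4.3 and then to match it with $\tau_1(X)$ via the orthogonal decomposition of Proposition 5.1; the biconditional with harmonicity is then nothing more than the definition of a critical point.

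First I would observe that each $\mathcal{U}_t$ factors as an inclusion $\mathcal{U}_t:M\to S(M)\hookrightarrow TM$, that the energy of $X$ with respect to $G=i^*g_{\delta,\sigma}$ coincides with its energy as a map into $(TM,g_{\delta,\sigma})$, and that the variational vector field $\mathcal{V}=\frac{d}{dt}\big|_{t=0}\mathcal{U}(p,t)$ is automatically vertical (the curve $t\mapsto\mathcal{U}(p,t)$ stays inside the fibre $T_pM$). Writing $\mathcal{V}=V^v$, differentiation of the constraint $g(\mathcal{U}_t,\mathcal{U}_t)=1$ at $t=0$ yields $g(V,X)=0$, so $\mathcal{V}$ sits inside the subbundle $\bar{\mathcal{V}}\subset TS(M)$ and is $g_{\delta,\sigma}$-orthogonal to the unit normal field $N^{g_{\delta,\sigma}}=(\tfrac{\sigma}{\alpha}X^h+X^v)/\sqrt{\sigma-\tfrac{2\sigma^2}{\alpha}+\delta}$.

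Next I would apply Theorem 4.3 with $G=g_{\delta,\sigma}$ to obtain
\begin{align*}
\frac{d}{dt}\bigl\{E(\mathcal{U}_t)\bigr\}\Big|_{t=0}=-\int_M g_{\delta,\sigma}(V^v,\tau(X))\,dvol(g),
\end{align*}
and then decompose $\tau(X)=\tau_1(X)+\mu\,N^{g_{\delta,\sigma}}$ as in Proposition 5.1. Because $V^v$ is $g_{\delta,\sigma}$-orthogonal to $N^{g_{\delta,\sigma}}$ and because $G=i^*g_{\delta,\sigma}$ agrees with $g_{\delta,\sigma}$ on vectors tangent to $S(M)$, the normal component contributes zero and the pairing collapses to $G(V^v,\tau_1(X))$, producing the displayed identity. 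The biconditional is then immediate from the definition of harmonicity: $X$ is harmonic iff it is a critical point of $E$ among unit vector fields, which by the just-established formula is equivalent to the integral vanishing for every admissible variation. The only real subtlety --- and the natural place for the main technical obstacle --- lies in verifying that admissible variational fields exhaust $\bar{\mathcal{V}}$; this is handled by the standard radial-projection construction $\mathcal{U}_t=(X+tV)/\|X+tV\|$, which for any $V\in\Gamma(TM)$ with $g(V,X)=0$ is well defined for small $t$ and has variational field $V^v$.
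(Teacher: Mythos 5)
The paper states this lemma without proof, presenting it as a known fact from harmonic map theory, so there is no in-paper argument to compare against; your derivation is correct and is essentially the canonical one. You correctly combine the first-variation formula of Theorem 4.3 (applied to the composite $M\to S(M)\hookrightarrow TM$, using that the energies with respect to $i^*g_{\delta,\sigma}$ and $g_{\delta,\sigma}$ coincide), the verticality of the variational field together with the constraint $g(V,X)=0$ obtained by differentiating $g(\mathcal{U}_t,\mathcal{U}_t)=1$, and the $g_{\delta,\sigma}$-orthogonality of $V^v$ to the unit normal, which indeed holds because
\begin{align*}
g_{\delta,\sigma}\Bigl(V^v,\tfrac{\sigma}{\alpha}X^h+X^v\Bigr)=\Bigl(\delta-\tfrac{\sigma^2}{\alpha}\Bigr)g(V,X)=0 ,
\end{align*}
so that in the pairing the normal component of $\tau(X)$ drops out and only $\tau_1(X)=\mathrm{tan}\,\tau(X)$ from Proposition 5.1 survives. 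Your closing observation that the normalized variations $\mathcal{U}_t=(X+tV)/\|X+tV\|$ realize every $V$ orthogonal to $X$ as a variational field is exactly the point needed to make the lemma effective later (it is what justifies concluding $V=0$ from $\int_M\|V^v\|^2\,dvol(g)=0$ in the proof of Theorem 5.4). The only caveat is one of scope: the lemma as printed allows an arbitrary metric $G$ on $S(M)$, whereas your route through the ambient $(TM,g_{\delta,\sigma})$ covers the induced case $G=i^*g_{\delta,\sigma}$, which is the only case the paper uses; for a genuinely arbitrary $G$ one would instead quote the intrinsic first-variation formula for maps into $(S(M),G)$ directly.
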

Main theorem of paper, specified as theorem 5.4. It presents condition on vector fields to be harmonic as a map from $(M,g)$ to $(S(M), g_{\delta ,0})$.
\begin{theorem}
Let $(M,g)$ be a compact orientable Riemannian manifold and $X$ be a unit vector field on $M$. Then $X:(M,g) \longrightarrow (S(M), g_{\delta ,0})$ is a harmonic vector field if and only if
\begin{align}
\Delta _gX &=\frac{1}{\delta}[(\delta - \frac{1}{2}d \delta (X^v)) ||\nabla X||^2 - \frac{n}{2} d \alpha (X^v)  ] X
\nonumber \\
& +\frac{n}{2}K( \bar{\nabla} \alpha ) + \frac{1}{2} ||\nabla X ||^2K( \bar{\nabla}\delta )
\nonumber \\
&-\frac{1}{\delta}  \sum _{i=1}^n [(V_i^h(\delta)+(\nabla _{V_i}X)^v(\delta )) \nabla _{V_i}X],
\end{align}
where $K$ is connection map with respect to the Levi-civita connection of $g$.
\end{theorem}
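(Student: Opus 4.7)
The strategy is to use Lemma 5.3 to reduce harmonicity of $X$ to a pointwise vanishing of the vertical component $v(\tau_1(X))$, and then to extract this condition directly from expression (30) for $\tau_1(X)$.

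First I would invoke Lemma 5.3: $X$ is harmonic if and only if $\int_M g_{\delta,0}(V^v,\tau_1(X))\, dvol(g)=0$ for every variational vector field $V^v$ along a variation of $X$ through unit vector fields. As noted preceding (32), such $V$ are characterised by $g(V,X)=0$. Because $\sigma=0$, the horizontal and vertical sub-bundles are $g_{\delta,0}$-orthogonal, so only the vertical part of $\tau_1(X)$ contributes to the pairing and $g_{\delta,0}(V^v,\tau_1(X))=\delta\, g(V,K(v(\tau_1(X))))$. A standard bump-function argument converts the integral condition into the pointwise requirement that $K(v(\tau_1(X)))$ be $g$-orthogonal to every $V$ with $g(V,X)=0$, hence parallel to $X$. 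Since $\tau_1(X)\in T_XS(M)$ forces its vertical part into $\bar{\mathcal{V}}$, we also have $g(K(v(\tau_1(X))),X)=0$ automatically, so the two conditions together yield $K(v(\tau_1(X)))=0$.

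Next I would read the vertical component of $\tau_1(X)$ directly off equation (30), using the decomposition $v(\bar{\nabla}f)=(K(\bar{\nabla}f))^v$ to absorb the gradient terms. Applying $K$ and setting the result to zero produces an equation for $\Delta_g X$ that still carries a $g(\Delta_g X,X)\,X$ contribution coming from the bracket inside the $\{\cdots\}^v$ factor. To dispose of this coupling I would use that $X$ has unit length: differentiating $g(X,X)=1$ in the direction of a local orthonormal frame $\{V_i\}$ gives $g(\nabla_{V_i}X,X)=0$, and a further differentiation yields $g(\nabla_{V_i}\nabla_{V_i}X,X)=-||\nabla_{V_i}X||^2$. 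Summing over $i$ and invoking the definition $-\Delta_g X=\sum_i[\nabla_{V_i}\nabla_{V_i}X-\nabla_{\nabla_{V_i}V_i}X]$ produces the key identity $g(\Delta_g X,X)=||\nabla X||^2$.

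Substituting this identity and grouping the $||\nabla X||^2$ contributions into the coefficient of $X$ then yields exactly formula (34). I expect the main obstacle to be the first step: one has to argue carefully that the integral criterion of Lemma 5.3 localises to a pointwise condition, and that the geometric constraint $v(\tau_1(X))\in\bar{\mathcal{V}}$, combined with the parallelism with $X$ forced by the integral criterion, really does force full vanishing rather than leaving a scalar multiple of $X^v$ undetermined. Once that reduction is in hand, the remainder is algebraic rearrangement of (30) together with the unit-norm identity $g(\Delta_g X,X)=||\nabla X||^2$.
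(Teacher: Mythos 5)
Your proposal is correct and follows essentially the same route as the paper: both reduce harmonicity via the first-variation formula (Lemma 5.3 and relation (32)) to the pointwise vanishing of $K(\tau_1(X))$ — using that for $\sigma=0$ the horizontal and vertical sub-bundles are $g_{\delta,0}$-orthogonal, that the admissible test fields are exactly the $V^v$ with $g(V,X)=0$, and that the $X^v$-component of $\tau_1(X)$ vanishes automatically — and then extract (34) from (30) using $g(\Delta_gX,X)=\|\nabla X\|^2$. The only cosmetic differences are that the paper localises by taking the test field to be the orthogonal part of $\tau_1(X)$ itself rather than by a bump-function argument, and derives $g(\Delta_gX,X)=\|\nabla X\|^2$ from the identity $g(\Delta_gX,X)=\tfrac12\Delta(\|X\|^2)+\|\nabla X\|^2$ rather than by differentiating $g(X,X)=1$ twice.
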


\begin{proof}
($\Longrightarrow $) Let $X$ be a harmonic vector field, we show that (34) holds. suppose,
\begin{align}
\tau _1(X)=\zeta X^h + \lambda X^v +V^v + W^h,
\end{align}
where $V$ and $W$ are perpendicular vector fields to $X$ and $\lambda ,  \zeta :X(M) \longrightarrow R $ are $C^{\infty}$ functions. we show that $V=0$ and $\lambda =0$. From (35), we have
\begin{align}
||V^v||^2=g_{\delta,0}( \tau _1(X),V^v).
\end{align}
According to (32), $ \int_M ||V^v||^2 dvol(g) = \int_M g_{\delta ,0}(\tau _1 (X) ,V^v) dvol(g)=0$. This shows that $V=0$, and $\tau _1(X)=\tau (X) - g_{\delta ,0}(\tau (X) , X^v) \frac{X^v}{\delta}$ shows that $\tau _1(X)$ hasn't any component in direction of $X^v$, i.e., $\lambda =0$. From (35) and $V=0$ and $\lambda =0$, one can get $K(\tau _1 (X))= 0$. On the other hand, from (30), we have
\begin{align}
K(\tau _1 (X))&=\frac{1}{\delta} \lbrace \sum _{i=1}^n [(V_i^h(\delta)+(\nabla _{V_i}X)^v(\delta )) \nabla _{V_i}X]+\delta \Delta _gX \rbrace 
\nonumber \\
&-\frac{1}{\delta}[g( \delta   \Delta _g X ,X) - \frac{n}{2} d \alpha (X^v) - \frac{1}{2} g(\nabla X , \nabla X) d \delta (X^v) ] X
\nonumber \\
&- \frac{n}{2}K( \bar{\nabla} \alpha ) - \frac{1}{2} g(\nabla X,\nabla X)K( \bar{\nabla}\delta ).
\end{align}
Using $g( \Delta _gX,X)=\frac{1}{2}\Delta (||X||^2) + ||\nabla X||^2$, we get
\begin{align}
K(\tau _1 (X))&=\frac{1}{\delta} \lbrace \sum _{i=1}^n [(V_i^h(\delta)+(\nabla _{V_i}X)^v(\delta )) \nabla _{V_i}X]+\delta \Delta _gX \rbrace 
\nonumber \\
&-\frac{1}{\delta}[(\delta - \frac{1}{2}d \delta (X^v)) ||\nabla X||^2 - \frac{n}{2} d \alpha (X^v)  ] X
\nonumber \\
&- \frac{n}{2}K( \bar{\nabla} \alpha ) - \frac{1}{2} g(\nabla X,\nabla X)K( \bar{\nabla}\delta ).
\end{align}
Using $K(\tau _1 (X))=0$ in (38) gives us
\begin{align}
\Delta _gX &=\frac{1}{\delta}[(\delta - \frac{1}{2}d \delta (X^v)) ||\nabla X||^2 - \frac{n}{2} d \alpha (X^v)  ] X
\nonumber \\
& +\frac{n}{2}K( \bar{\nabla} \alpha ) + \frac{1}{2} g(\nabla X,\nabla X)K( \bar{\nabla}\delta )
\nonumber \\
&-\frac{1}{\delta}  \sum _{i=1}^n [(V_i^h(\delta)+(\nabla _{V_i}X)^v(\delta )) \nabla _{V_i}X].
\end{align}
($\Longleftarrow$) Let (34) holds, we show that $X$ is a harmonic vector field. Substitute (34) in (37) gives us, $K(\tau _1 (X))=0$, i.e., the vertical part of $\tau _1(X)$ is zero. Lemma 5.3. with $K(\tau _1 (X))=0$ give 
\begin{align}
\frac{d}{dt} \lbrace E(\mathcal{U}_t) \rbrace \mid _{t=0}=- \int _M g_{\delta , 0} (V^v,\tau _1 (X)) \hspace{1mm} dvol(g)=0,
\end{align}
where $\mathcal{U}_t$ is a variation along $X$ through unit vector fields and $\mathcal{V}=V^v$ is its variation vector field. Note that the vertical and horizontal sub-bundles are perpendicular to each other with respect to $g_{\delta , 0}$.
\end{proof}
\begin{corollary}
Theorem 5.4. includes the particular theorem of Wiegmink \cite{wiegmink}.
\end{corollary}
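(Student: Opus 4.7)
The plan is to exhibit Wiegmink's theorem as the specialization of Theorem~5.4 to the Sasaki metric. First I would identify which choice of the triple $(\alpha,\delta,\sigma)$ recovers $g_s$. Comparing the formulas (5)--(7) with the defining formulas for the Sasaki metric (horizontal--horizontal and vertical--vertical inner products equal to $g$, and mixed inner product zero) forces the constant assignment $\alpha \equiv 1$, $\delta \equiv 1$, $\sigma \equiv 0$ on $TM$. This trivially satisfies the isotropy constraint $\alpha\delta - \sigma^2 = 1$ of the definition of an isotropic almost complex structure, and the hypothesis $\sigma = 0$ required by Theorem~5.4 is in force, so that theorem applies verbatim to $i^{*}g_{\delta,0} = i^{*}g_s$.

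Next I would exploit the constancy of $\alpha$ and $\delta$ to kill the ``correction'' terms in (34). Because these functions are constant on $TM$, all of their derivatives appearing in the formula vanish identically:
\[
\bar{\nabla}\alpha = 0, \quad \bar{\nabla}\delta = 0, \quad V_i^h(\delta) = 0, \quad (\nabla_{V_i}X)^v(\delta) = 0, \quad d\alpha(X^v) = 0, \quad d\delta(X^v) = 0.
\]
Substituting these equalities, together with $\delta = 1$, into the right--hand side of (34) I expect the first bracket $[(\delta - \tfrac{1}{2}d\delta(X^v))\|\nabla X\|^{2} - \tfrac{n}{2}d\alpha(X^v)]X$ to collapse to $\|\nabla X\|^{2} X$, the two $K$--gradient terms $\tfrac{n}{2}K(\bar{\nabla}\alpha) + \tfrac{1}{2}\|\nabla X\|^{2}K(\bar{\nabla}\delta)$ to vanish, and the residual sum $\tfrac{1}{\delta}\sum_{i} (V_i^h(\delta) + (\nabla_{V_i}X)^v(\delta))\nabla_{V_i}X$ to vanish as well. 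What is left is exactly
\[
\Delta_g X = \|\nabla X\|^{2}\,X,
\]
i.e.\ Wiegmink's condition for harmonicity of a unit vector field from $(M,g)$ to $(S(M),i^{*}g_s)$.

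Since every step after the initial identification reduces to crossing out terms that carry a factor of a derivative of $\alpha$ or $\delta$, there is no serious analytic content to overcome. The only conceptually nontrivial step -- and the one I would present carefully -- is the matching of the Sasaki metric to the triple $(1,1,0)$, after which invoking Theorem~5.4 immediately yields the classical criterion and therefore the corollary.
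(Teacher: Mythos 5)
Your proposal is correct and follows exactly the paper's route: the paper's proof is the single line ``putting $\delta=1$ in Theorem 5.4 proves the corollary,'' which (together with the standing assumption $\sigma=0$ and the constraint $\alpha\delta-\sigma^{2}=1$ forcing $\alpha=1$) is precisely your identification of the Sasaki metric with the constant triple $(1,1,0)$ and the consequent vanishing of all derivative terms in (34). Your write-up simply makes explicit the term-by-term collapse to $\Delta_g X=\|\nabla X\|^{2}X$ that the paper leaves to the reader.
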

\begin{proof}
Putting $\delta =1$ in Theorem 5.4. proves the corollary.
\end{proof}
The following proposition states that, if a unit vector field satisfies in (34) then it is a critical point of the following functional.
\begin{proposition}
The energy functional $E:\Gamma (S(M),g_{\delta ,0}) \longrightarrow R^+$ is given by
\begin{align}
E(X)=\frac{1}{2} \int _M (n \alpha +\delta ||\nabla X||^2) dvol(g),
\end{align}
for every, unit vector field $X:(M,g) \longrightarrow (S(M),g_{\delta , 0})$.
\begin{proof}
We know that the energy functional is given by
\begin{align}
E(X)=\frac{1}{2} \int _M ||dX||^2 dvol(g).
\end{align}
On the other hand
\begin{align}
||dX||^2&=tr_g X^* g_{\delta ,0}=\sum _{i=1}^n g_{\delta ,0}(X_* V_i , X_*V_i)
\nonumber \\
&=\sum _{i=1}^n g_{\delta ,0}(V_i^h +( \nabla _{V_i} X)^v ,V_i^h +( \nabla _{V_i} X)^v)
\nonumber \\
&=n \alpha + \delta || \nabla X||^2 ,
\end{align}
where $\lbrace V_1 ,...,V_n  \rbrace $ is an orthonormal locally basis for $TM$.
\end{proof}
\end{proposition}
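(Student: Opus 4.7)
The plan is to unwind the definition of the energy functional directly, using the description of $X_{\ast}$ from Lemma 4.1 and the explicit formulas (7)--(9) for $g_{\delta,\sigma}$ specialized at $\sigma=0$. No variational argument or tension-field computation is needed; the identity is a purely pointwise algebraic consequence of how $g_{\delta,0}$ pairs horizontal and vertical lifts.

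First I would start from the definition
\begin{align*}
E(X)=\tfrac{1}{2}\int_{M}\|dX\|^{2}\,dvol(g),\qquad \|dX\|^{2}=tr_{g}\,X^{\ast}g_{\delta,0},
\end{align*}
and pick a local orthonormal frame $\{V_{1},\ldots,V_{n}\}$ on $(M,g)$, so that
\begin{align*}
\|dX\|^{2}=\sum_{i=1}^{n}g_{\delta,0}(X_{\ast}V_{i},X_{\ast}V_{i}).
\end{align*}
By Lemma 4.1, the differential of $X$ viewed as a map $M\longrightarrow TM$ decomposes as $X_{\ast}V_{i}=V_{i}^{h}+(\nabla_{V_{i}}X)^{v}$, which is the horizontal--vertical splitting of $X_{\ast}V_{i}$ at the point $(p,X_{p})$.

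Next I would expand the quadratic form $g_{\delta,0}(V_{i}^{h}+(\nabla_{V_{i}}X)^{v},V_{i}^{h}+(\nabla_{V_{i}}X)^{v})$ by bilinearity into three pieces and evaluate each with (7)--(9) at $\sigma=0$: the pure horizontal piece contributes $\alpha\,g(V_{i},V_{i})=\alpha$, the pure vertical piece contributes $\delta\,g(\nabla_{V_{i}}X,\nabla_{V_{i}}X)$, and the mixed piece is $-\sigma\,g(V_{i},\nabla_{V_{i}}X)\circ\pi=0$ because $\sigma=0$. Summing over $i$ gives $\|dX\|^{2}=n\alpha+\delta\,\|\nabla X\|^{2}$, where $\|\nabla X\|^{2}=\sum_{i}g(\nabla_{V_{i}}X,\nabla_{V_{i}}X)$ is independent of the choice of orthonormal frame. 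Substituting into the integral yields the desired expression.

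The only point to watch is that the vanishing of the mixed term relies essentially on $\sigma=0$; otherwise one would pick up an extra $-2\sigma\,\mathrm{div}(X)$-type term after integration. Since the hypothesis of the proposition fixes $\sigma=0$, this is automatic and no extra work is required. Hence the computation is essentially bookkeeping, with the main conceptual step being the use of Lemma 4.1 to express $X_{\ast}V$ in terms of lifts so that the metric formulas (7)--(9) apply.
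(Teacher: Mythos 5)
Your proposal is correct and follows essentially the same route as the paper: expand $\|dX\|^{2}=\sum_{i}g_{\delta,0}(X_{*}V_{i},X_{*}V_{i})$ using the splitting $X_{*}V_{i}=V_{i}^{h}+(\nabla_{V_{i}}X)^{v}$ from Lemma 4.1 and evaluate with the formulas (7)--(9) at $\sigma=0$. Your explicit remark that the mixed term vanishes only because $\sigma=0$ is a useful clarification that the paper leaves implicit, but it does not constitute a different argument.
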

We end our work with some examples on harmonic vector fields satisfing Theorem 5.4.
\begin{example}
Let $(R^n, \langle .,. \rangle )$ be Euclidean space and $(TR^n, g_s)$ be its tangent bundle equipped with Sasaki metric. Then, every parallel unit vector field is harmonic unit vector field. In this example, isotropic almost complex structure is the natural almost complex structure on $TR^n$.
\end{example}
\begin{example}
Define three complex structures $J_1,J_2$ and $J_3$ on Euclidean four space $(E^4 , \langle .,. \rangle) $ by
\begin{align}
&J_1(v_1,v_2,v_3,v_4)=(-v_2,v_1,-v_4,v_3),\\
&J_2(v_1,v_2,v_3,v_4)=(v_3,-v_4,-v_1,v_2),\\
&J_3(v_1,v_2,v_3,v_4)=(v_4,v_3,-v_2,-v_1),
\end{align}
where $ (v_1,v_2,v_3,v_4)$ is a tangent vector to $R^4$. It is simple to check that $(R^4,J_i , \langle .,. \rangle)$ for $i=1,2,3$ are kahler manifolds. Moreover, suppose $p=(p_1,p_2,p_3,p_4) $ be a point in $S^3$ and $N(p)=(p_1,p_2,p_3,p_4)$ be position vector field. Let $X_1(p)=J_1N, X_2(p)=J_2N$ and $X_3(p)=J_3N$ be tangent vector fields on $S^3$. One can check that $X_1,X_2$ and $X_3$ are unit vector fields and are perpendicular to each other, i.e., $\lbrace X_1, X_2 ,X_3 \rbrace$ is an orthonormal basis for $TS^3$.

We will show that $X_1:(S^3,g) \longrightarrow (S(S^3),g_{\delta ,0})$ is a harmonic unit vector field, That is, we will show that $X_1$ satisfies in (34) ($g$ is induced by Euclidean metric on $S^3$).

Using Gauss formula gives us the Levi-Civita connection $\nabla  ^S$ of $g$ as following
\begin{align}
 \nabla  ^S _Z W=\nabla ^E _ZW +g(Z,W)N.
\end{align}
If $V$ is a vector field on $S^3$ then from (47) one can get
\begin{align}
\nabla  ^S _V X_1&=\nabla ^E _VX_1 +g(V,X_1)N=\nabla ^E _VJ_1N +g(V,X_1)N
\nonumber \\
&=J_1V+g(V,X_1)N.
\end{align}
By the definition, the Laplacian of $X_1$ is 
\begin{align}
- \Delta _g X_1 &= \nabla ^S _{X_1} \nabla ^S _{X_1} X_1 + \nabla ^S _{X_2} \nabla ^S _{X_2} X_1 + \nabla ^S _{X_3} \nabla ^S _{X_3} X_1
\nonumber \\
& - \sum _{i=1} ^3 \nabla ^S _{\nabla ^S _{X_i}X_i} X_1 .
\end{align}
Using by (48) and (47) and the fact that ${\nabla ^S _{X_i}X_i}=0$ (because the integral curves of $X_i$ for all $i=1,2,3$ are geodesics of $S^3$), we have
\begin{align}
- \Delta _g X_1 &= J_1( \nabla ^E _{X_2} X_2 +\nabla ^E _{X_3} X_3).
\end{align}
With an straight forward calculation on (50), we have
\begin{align}
- \Delta _g X_1=-2J_1N=-2X_1,
\end{align}
so, $\Delta _g X_1=2X_1$. 

For calculating the right hand side of (34), we need to define $\alpha , \beta :TS^3  \longrightarrow R$. Let $V \in \Gamma (TS^3)$ be an arbitrary vector field on $S^3$. Since, $X_i$, $i=1,2,3  $ are global sections and basis on $TS^3$, we can write $V= \sum _{i=1}^3 V^i X_i $ where $V^i :TS^3 \longrightarrow R$ are smooth maps for $i=1,2,3$. We define $\alpha , \beta:TS^3 \longrightarrow R$ with definitions $\alpha (V)=\frac{(V^1)^2}{2}+1$ and $\beta (V) = \frac{1}{\alpha (V)}$. One can get
\begin{align}
&d\alpha (A)=V^1 dV^1(A) \quad and \hspace{6mm} (\bar{\nabla}\alpha)_V =V^1( \bar{\nabla} V^1)_V =\frac{V^1}{\delta}  (X_1^v)_V,\\
&d \delta (A)=-\delta ^2V^1 dV^1(A) \quad and \hspace{6mm} (\bar{\nabla}\delta)_V =-\delta  V^1 (X_1^v)_V.
\end{align}
Note that $\bar{\nabla} V^1=\frac{X_1^v}{\delta}$. Since in the right hand side of (34), $d\delta$ and $d\alpha $ are calculated at $X_1$ and also the vlues of $\alpha$ and $\delta$ are calculated at $X_1$, one can write
\begin{align}
&d\alpha |_{X_1} (X_1^v)=1  \quad and \hspace{6mm} (\bar{\nabla}\alpha)_{X_1} = (\frac{X_1^v}{\delta})_{X_1},\\
&d \delta |_{X_1}(X_1^v)=-\delta ^2 \quad and \hspace{6mm} (\bar{\nabla}\delta)_{X_1} =  (-\delta  X_1^v)_{X_1}.
\end{align}
Using (54) and (55), the right hand side of (34) is calculated as following
\begin{align}
&\frac{1}{\delta}[(\delta + \frac{1}{2} \delta ^2 ) 2 - \frac{n}{2}] X_1 +\frac{n}{2 \delta} X_1 - \delta  X_1
\nonumber \\
&=2X_1.
\end{align}
Note that the third line of (34) is zero. From (56) and (51) and (34) we conclude that $X_1$ is a harmonic unit vector field on $S^3$.
\end{example}
\section{Conclusios}
In this paper, the harmonicity of unit vector fields were investigated, whereas the tangent bundle was equipped with a Riemannian metric which is induced by isotropic almost complex structures, for this reason, we need to calculate the Levi-Civita connection of itroduced metric and tension tensor field of unit vector fields. The proof of Lemma 4.2 was an important step for calculating the tension tensor field of vector fields.

 In the Section 5, we calculated the tension tensor field of unit vector fields using by a helpful theorem which makes shortening the tension tensor field calculation. Since, For proving the main theorem we used the variational problem, Lemma 5.3 was most fundamental lemma to state.

 Finally we present some examples satisfying in the main theorem. The first example is a particular example, because we used the Sasaki metric on the tangent bundle and the second example is an important example, because of its connection with Hopf vector fields. This example leads us to investigate the harmonicity of Hopf vector fields in this sense. Future work is checking the harmonicity of Hopf vector fields on 3-sphere.

\vspace{.3cm}

\end{document}